\documentclass[11pt]{article}

\usepackage[margin=1in]{geometry}
\usepackage{amsmath,amssymb,amsthm}
\usepackage{array}
\usepackage{booktabs}
\usepackage{longtable}   
\usepackage{microtype}
\usepackage[numbers]{natbib}
\usepackage[colorlinks=true,linkcolor=blue,citecolor=blue,urlcolor=blue]{hyperref}
\usepackage[capitalize]{cleveref}
\usepackage[T1]{fontenc}
\usepackage{lmodern}

\newtheorem{theorem}{Theorem}[section]
\newtheorem{proposition}[theorem]{Proposition}
\newtheorem{lemma}[theorem]{Lemma}
\theoremstyle{definition}
\newtheorem{definition}[theorem]{Definition}
\theoremstyle{remark}
\newtheorem*{remark}{Remark}

\providecommand{\tightlist}{\setlength{\itemsep}{0pt}\setlength{\parskip}{0pt}}
\renewcommand{\_}{\textunderscore\allowbreak}
\title{New lower bounds for the degree/diameter problem\\
       via interaction with a browser-accessible LLM}
\author{Ryosuke Mizuno\thanks{A.I.System Research Co., Ltd.\quad
        \texttt{mizuno@aisr.dev}}}
\date{11 June 2026}

\begin{document}
\maketitle

\begin{abstract}
Let \(N(\Delta,D)\) denote the maximum number of vertices in a simple
undirected connected graph with maximum degree at most \(\Delta\) and
diameter at most \(D\). Determining \(N(\Delta,D)\) is known as the
degree/diameter problem. Although the problem has been studied for many
years, the exact value of \(N(\Delta,D)\) is unknown for most pairs
\((\Delta,D)\). In this paper we construct explicit graphs, using a
construction discovered through interaction with
ChatGPT via its standard web interface, showing that \[
  N(12,5)\ge 34{,}992,\qquad
  N(16,5)\ge 147{,}456 .
\] These improve the corresponding recorded lower bounds \(29{,}621\)
and \(132{,}496\). The search was conducted without an external orchestration layer around
ChatGPT: no custom agent framework, automated evaluator-driven search
loop, problem-specific search engine, or formal proof assistant was set
up in advance by the author. As far as the visible
transcript shows, the author did not prompt the model with the concrete
components or candidate families of the construction.
After presenting the mathematical result, we describe the discovery
process on the basis of the visible transcript. We focus on the
meta-level interventions made during the approximately six-day search,
and we identify the stage at which the abstraction underlying the
construction first appeared.
\end{abstract}

\medskip
\noindent\textbf{MSC 2020:} 05C12, 05C25, 05C76.\\
\textbf{Keywords:} degree/diameter problem, graph theory, LLM-assisted
mathematical discovery, mathematical reasoning, human--AI collaboration,
ChatGPT, transcript analysis.

\section{Introduction}\label{introduction}

Recent work has rapidly expanded the use of large language models
(LLMs), neural language models, and systems built around them for
mathematical construction search, theorem proving, problem generation,
and formalization support. Representative peer-reviewed examples include
FunSearch \cite{RomeraParedes2024}, which obtains mathematical
constructions by combining a pretrained LLM with a systematic evaluator
in a program-search loop, and AlphaGeometry \cite{Trinh2024}, which
combines a neural language model with a symbolic deduction engine for
olympiad geometry. Other examples include AlphaProof \cite{Hubert2026}, a
reinforcement-learning approach to formal proof search using Lean as a
verifiable environment, and TongGeometry \cite{Zhang2026}, which
addresses the proposal and proof of olympiad geometry problems by guided
tree search and a fine-tuned LLM.

From late 2025 through May 2026, preprints, technical reports, and
public reports from research institutions have also described the use of
AI/LLMs in mathematical discovery \cite{Bubeck2025, Woodruff2026,
NagdaRaghavanThakurta2026, BhanNobiliLanger2026, JuEtAl2026,
PrzybockiMackeyHeuleSubercaseaux2026, TsoukalasEtAl2026, OpenAI2026,
AlonEtAl2026}. Many of these examples rely on dedicated evaluators,
evolutionary search, tree search, formalizers, symbolic reasoning
engines, theorem retrieval, SAT solvers, the Lean prover, or
problem-specific evaluation loops. The present paper considers a
different and more minimal setting:

\begin{quote}
How far can mathematical search and discovery proceed using only an LLM
available through a standard browser interface?
\end{quote}

This question is practically relevant not only for laboratories able to
build large dedicated systems, but also for individual researchers and
developers who use LLMs for mathematical exploration. Even when a
dedicated agentic system or orchestration is eventually built, it is
useful to understand what kind of control is possible over a readily
available LLM, and how far that baseline can reach. As a case study in
this setting, this paper documents a new lower-bound construction for
the degree/diameter problem. The construction was obtained through a
long dialogue with ChatGPT in its standard web interface, during which
the author made meta-level interventions.

The degree/diameter problem asks for the maximum number of vertices in a
simple undirected graph with bounded maximum degree and bounded
diameter. In the notation used here, the maximum degree is at most
\(\Delta\), so each vertex has at most \(\Delta\) neighbors, and the
diameter is at most \(D\), so any two vertices are connected by a path
of length at most \(D\).

Intuitively, when each vertex has at most \(\Delta\) incident edges, the
number of vertices reachable within distance \(D\) from a fixed vertex
is maximized in the tree-like case, where no branches meet up to and
including depth \(D\). This gives the Moore upper bound \[
  N(\Delta,D)
  \le
  1+\Delta\sum_{i=0}^{D-1}(\Delta-1)^i
  =\frac{\Delta(\Delta-1)^D-2}{\Delta-2}
  \qquad(\Delta\ge3).
\] For many pairs \((\Delta,D)\) there is no Moore graph attaining this
bound, and the construction of large explicit examples has long been
studied \cite{HoffmanSingleton1960, Damerell1973, Bannai1973,
MillerSiran2013}. Known constructions include Cayley graphs, voltage
graphs, compound graphs, and other computational and algebraic
constructions, which have played an important role in setting records
for fixed parameters \cite{BermondDelormeFarhi1984,
BermondDelormeQuisquater1986, LozSiran2008, Comellas2026}.

The mathematical contribution of this paper is a construction, obtained
by integrating existing tools for the degree/diameter problem, that
establishes the new lower bounds \[
  N(12,5)\ge 34{,}992,
  \qquad
  N(16,5)\ge 147{,}456 .
\]

The main text first states the theorem, compares it with the recorded lower
bounds, and outlines the construction principle. Precise definitions,
proofs, and verification procedures are in Appendix A. We then describe
how the construction principle emerged during the dialogue, and how its
appearance was temporally related to failures, changes of direction, and
meta-level interventions by the author.

This paper does not treat ChatGPT as an author or as an autonomous
prover. ChatGPT was used as an interactive discovery engine. The author
checked the mathematical claims, code, certificates, citations, and
final manuscript, and takes responsibility for the contents of the
paper.

The LLM-generated files included in the supplementary materials are, in
principle, preserved in the form obtained during the dialogue, in order
to record the provenance of the discovery process and of the generated
construction. The mathematical claims of the paper are supported by the
explicit construction, the finite list of certificate conditions, and
the verification procedures that check them, all of which are given in
the main text and in Appendix A.
A cleaned, runnable version of the construction package, sharing the same graph
and certificate data, is also included in the supplementary materials so that the verification can be
reproduced directly, as described in \S\ref{a.8-verification-and-reproducibility} of Appendix A.

\subsection{Contributions of this
paper}\label{contributions-of-this-paper}

This paper makes two contributions.

\begin{enumerate}
\def\labelenumi{\arabic{enumi}.}
\item
  \textbf{Improved mathematical lower bounds.} For the degree/diameter
  problem with diameter \(5\), we give explicit simple undirected graph
  constructions establishing the new lower bounds \(N(12,5)\ge34{,}992\)
  and \(N(16,5)\ge147{,}456\).
\item
  \textbf{A record and analysis of a discovery process involving a
  browser-accessible LLM.} We describe a long dialogue with ChatGPT in
  its standard web interface, including unsuccessful search intervals.
  The dialogue was conducted without an external orchestration layer around
  ChatGPT: no custom agent framework, automated evaluator-driven search
  loop, problem-specific search engine, or formal proof assistant was set
  up in advance by the author. We also
  record, as process tracing rather than as a causal claim, that the
  author's meta-level interventions occurred near
  several junctures at which the search moved toward abstraction and
  finite certificate design. These interventions adjusted the
  constraints and acceptance criteria of the search.
\end{enumerate}

These two contributions can be read independently. The mathematical
result in Contribution 1 can be verified from the explicit construction
and the finite certificates in Appendix A alone. Its correctness does
not depend on the fact that the construction was discovered through
dialogue with a browser-accessible LLM. Contribution 2 is an
observational report of a single case concerning the process that led to
the discovery, and its outline can be understood without referring to
Appendix A.

\section{The degree/diameter problem and the main
result}\label{the-degreediameter-problem-and-the-main-result}

\subsection{Basic graph-theoretic terminology and the
degree/diameter
problem}\label{basic-graph-theoretic-terminology-and-the-degreediameter-problem}

Throughout, all graphs are finite. For integers \(\Delta,D\ge1\), define \[N(\Delta,D)\] to be the maximum number of vertices in a simple undirected connected
graph with maximum degree at most \(\Delta\) and diameter at most \(D\).
The degree/diameter problem is the problem of determining
\(N(\Delta,D)\). In the diameter-\(5\) case treated here, the goal is to
make the number of vertices as large as possible under the constraint
that any two vertices are joined by a path of length at most \(5\). 

We recall the underlying terminology, since this paper is intended to be
readable outside graph theory as well. The graphs considered here consist
of finitely many vertices and edges joining pairs of vertices.
 A graph whose edges are oriented is
called a \textbf{directed graph}, and a graph whose edges are not
oriented is called an \textbf{undirected graph}. An undirected graph
with no loops and no multiple edges is called a \textbf{simple
undirected graph}. A graph is \textbf{connected} if every vertex can be
reached from every other vertex by a path along edges.

The number of edges incident to a vertex \(v\) is the \textbf{degree} of
\(v\). Saying that the maximum degree of a graph is at most \(\Delta\)
means that every vertex has at most \(\Delta\) neighbors. A graph in
which all vertices have the same degree is called \textbf{regular}. The
\textbf{distance} between two vertices \(u,v\) is the minimum length of
a path from \(u\) to \(v\). The \textbf{diameter} of a graph is the
maximum distance over all pairs of vertices.
For example, constructing a \(12\)-regular graph of diameter \(5\) means
explicitly giving a large graph in which every vertex has degree \(12\)
and every pair of vertices is joined within \(5\) steps.

\subsection{Recorded lower bounds and the improvement in this
paper}\label{recorded-lower-bounds-and-the-improvement-in-this-paper}

Comellas's Mendeley Data V11 collects the largest known
\((\Delta,D)\)-graphs as of January 2026 \cite{Comellas2026,
ComellasData2026}. In the related web table, checked on 10 June 2026,
the entries relevant to this paper had the same lower-bound values. The
recorded lower bounds for diameter \(5\) relevant to this paper are as
follows.

\begin{table}[ht]
\centering
\caption{Recorded lower bounds for diameter~5 relevant to this paper.}
\label{tab:records}
\begin{tabular}{rrl}
\toprule
\((\Delta,D)\) & Recorded lower bound & Source \\
\midrule
\((12,5)\) & \(29{,}621\) & \cite{Comellas2025} \\
\midrule
\((16,5)\) & \(132{,}496\) & \cite{DelormeBipartite1985} \\
\bottomrule
\end{tabular}
\end{table}

This paper improves these to \(34{,}992\) and \(147{,}456\),
respectively.

\begin{theorem}\label{thm:main}
There exist explicit connected simple undirected graphs
\(G_3,G_4\) satisfying the following.

\begin{enumerate}
\def\labelenumi{\arabic{enumi}.}
\tightlist
\item
  \(G_3\) is \(12\)-regular, \(|V(G_3)|=34{,}992\), and
  \(\operatorname{diam}(G_3)=5\).
\item
  \(G_4\) is \(16\)-regular, \(|V(G_4)|=147{,}456\), and
  \(\operatorname{diam}(G_4)=5\).
\end{enumerate}
\end{theorem}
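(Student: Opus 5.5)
The plan is to prove the theorem by an explicit \emph{lift} (voltage-graph, or equivalently Cayley-type) construction. Regularity will then be a local bookkeeping fact, and the diameter bound reduces to a finite certificate that can be checked by hand or by a short verification program. The vertex counts suggest this structure. We have \(34{,}992=2^4\cdot 3^7\) and \(147{,}456=2^{14}\cdot 3^2\), and the indices in \(G_3,G_4\) suggest a parameter \(q\in\{3,4\}\) with \(\Delta=4q\). I would therefore look for a small base graph \(B_q\), possibly with loops or semi-edges, together with a voltage assignment in a group \(\Gamma_q\) whose order is a power of \(q\) times a small factor, such that \(|V(B_q)|\cdot|\Gamma_q|\) equals the stated number of vertices. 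I would then define \(G_q\) as the derived (lifted) graph.

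\textbf{Step 1 (simplicity and regularity).} Every vertex of \(B_q\) will have degree \(4q\) in \(B_q\), counting a loop with non-involutory voltage as contributing \(2\) and a semi-edge with involutory voltage as contributing \(1\). Lifting preserves degree, so \(G_q\) is \(4q\)-regular. Simplicity follows from three checks on the voltage assignment:
\begin{itemize}
\item no loop carries the identity voltage;
\item parallel edges between the same pair of base vertices carry distinct voltages;
\item the two directions of a loop do not give coinciding voltages, that is, the loop voltage \(g\) satisfies \(g\neq g^{-1}\) unless it is realised as a semi-edge.
\end{itemize}
The vertex count is immediate from the definition of the lift.

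\textbf{Step 2 (reduction of the diameter to a finite certificate).} This is the key step. Since \(\Gamma_q\) acts regularly on each fibre, \(\operatorname{diam}(G_q)\le 5\) is equivalent to the following statement. For every ordered pair of base vertices \((u,v)\), the set of net voltages of walks of length at most \(5\) from \(u\) to \(v\) in \(B_q\) covers all of \(\Gamma_q\). Thus only \(|V(B_q)|\) source fibres need to be considered rather than all vertices. The certificate consists of, for each \((u,v,g)\), an explicit base walk of length at most \(5\) together with a check that its voltage product equals \(g\). If \(\Gamma_q\) has a product structure, say an elementary abelian \(q\)-part extended by a small group, I would organise the certificate by first covering the small quotient and then showing that the walks in each coset realise every element of the \(q\)-part. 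The second claim becomes a linear-algebra statement over \(\mathbb{F}_q\) or \(\mathbb{Z}_q\), for instance that certain sums of at most five generators exhaust the module. Such a statement can be proved uniformly, or checked by enumeration since the group is small.

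\textbf{Step 3 (exact diameter and connectivity).} Connectivity follows from Step 2. For \(\operatorname{diam}=5\) rather than \(\le 5\), it suffices to count: the Moore bound for degree \(4q\) and diameter \(4\) is smaller than the stated vertex count. For \(\Delta=12\) this bound is \(1+12(1+11+121+1331)=17{,}569<34{,}992\), and for \(\Delta=16\) it is \(1+16(1+15+225+3375)=57{,}857<147{,}456\). Hence the diameter is at least \(5\).

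The main obstacle is Step 2: choosing voltages so that the coverage condition holds. At the level of counting it is tight. Balls of radius \(5\) in a \(12\)-regular graph have at most \(\approx 2.1\times 10^5\) vertices versus \(34{,}992\), so the coverage has little slack, and a careless choice fails. I would first try a compound or bipartite-like base graph in the style of Delorme's constructions, with voltages placed in an \(\mathbb{F}_q\)-vector space so that the covering becomes a statement about sums of a few structured vectors. If a uniform proof resists, I would fall back on the finite certificate, which is enumerable, and verify it by direct breadth-first search from one representative of each fibre.
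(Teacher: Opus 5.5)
\paragraph{The gap: no explicit graphs.} Your proposal never exhibits $G_3$ or $G_4$, and for this theorem the construction is the entire content. Steps 1 and 3 are essentially routine. Your Moore-bound comparison, $17{,}569<34{,}992$ and $57{,}857<147{,}456$, is exactly how the paper upgrades $\operatorname{diam}\le 5$ to $\operatorname{diam}=5$, and the reduction in Step 2 is correct for ordinary group-valued voltage lifts. However, you leave the base graph $B_q$, the group $\Gamma_q$ and the voltages as ``the main obstacle''. You offer only heuristics: a Delorme-style base, voltages in an $\mathbb F_q$-space, and a fallback to enumeration. With no concrete data there is nothing to certify. What you have is a template for verifying a construction, not a proof that one exists.

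\paragraph{The missing idea: one route covers a whole fibre.} The paper uses a two-level design in which a \emph{single} route in a small graph covers an entire fibre.
\begin{enumerate}
\item It builds a simple $4$-regular controller $C$ on $144$ vertices as a $C_4\times C_6$ voltage lift of an explicit $6$-vertex $4$-regular multigraph. It then checks, by net-voltage coverage in the spirit of your Step 2, that every ordered pair of vertices of $C$ is joined by a nonbacktracking walk of length \emph{exactly} $5$. This includes pairs with $c=d$.
\item It labels the directed edges of $C$ inverse-consistently by $\{a,a^{-1},b,b^{-1}\}$. On $V(C)\times\mathbb F_q^5$, each directed edge $u\to v$ with label $t$ is lifted to the $q$ edges $(u,x)\sim(v,A_tx+\lambda b_t)$, $\lambda\in\mathbb F_q$. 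This gives degree $4q$ and $144q^5$ vertices.
\item Along an NB route with reduced label word $w=t_1\cdots t_5$, the endpoint is $A_wx+M_w(\lambda_1,\dots,\lambda_5)^{T}$. So nonsingularity of the $5\times5$ controllability matrix $M_w$ for all $324$ reduced words yields $\operatorname{diam}\le5$. This check is independent of $C$.
\end{enumerate}

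\paragraph{Why your Step 2 is not the operative mechanism.} The final graph is not an ordinary group-valued lift of $C$; it is a permutation-voltage lift with affine voltages. Your criterion that net voltages of walks of length at most $5$ cover $\Gamma_q$ therefore does not describe how the diameter bound is obtained. Two features are essential:
\begin{enumerate}
\item The twists $A_t$. If all $A_t=I$, then $b_{t^{-1}}=b_t$, so $M_w$ has at most two distinct columns and rank at most $2$.
\item The exact length $5$ on the controller side. A route of length $k<5$ reaches at most $q^k$ points of the fibre.
\end{enumerate}
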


\cref{thm:main} immediately implies \[
  N(12,5)\ge34{,}992,
  \qquad
  N(16,5)\ge147{,}456 .
\]

The improvements are summarized below.

\begin{table}[ht]
\centering
\caption{Lower-bound improvements obtained in this paper, compared with the previously recorded lower bounds, together with the Moore upper bound and the resulting Moore ratio.}
\label{tab:known}
\begin{tabular}{crrrrr}
\toprule
Parameter & Previous LB & This paper & Improvement & Moore UB & Moore ratio \\
\midrule
\((12,5)\) & \(29{,}621\) & \(34{,}992\) & \(+5{,}371\) (\(+18.13\%\)) & \(193{,}261\) & \(18.11\%\) \\
\midrule
\((16,5)\) & \(132{,}496\) & \(147{,}456\) & \(+14{,}960\) (\(+11.29\%\)) & \(867{,}857\) & \(16.99\%\) \\
\bottomrule
\end{tabular}
\end{table}

Here the Moore ratio is the lower bound obtained in this paper divided
by the corresponding Moore upper bound.

\subsection{Outline of the construction: controller and
fiber}\label{outline-of-the-construction-controller-and-fiber}

The precise construction and proofs are given in Appendix A. This
section gives a guide to the construction principle.

A vertex of the final graph has the form \[
  (c,x)\in V(C)\times\mathbb F_q^5 ,
\] where \(\mathbb F_q\) is a finite field, \(c\) is a vertex of a small
auxiliary graph \(C\), and \(x\) is a coordinate in the copy of
\(\mathbb F_q^5\) lying above \(c\). This copy, \[
  \{c\}\times\mathbb F_q^5,
\] is called the \textbf{fiber} over \(c\).

We call \(C\) the \textbf{controller}. It determines the coarse routing
of the final graph. Edges are added to the vertex set
\(V(C)\times\mathbb F_q^5\) as follows.

\begin{enumerate}
\def\labelenumi{\arabic{enumi}.}
\item
  Choose a \(\rho\)-regular simple undirected graph \(C\). Let \(S\) be
  a finite set with \(|S|=\rho\), equipped with a fixed-point-free
  involution \(t\mapsto t^{-1}\), so that \[
    (t^{-1})^{-1}=t,\qquad t^{-1}\ne t .
  \] Such an \(S\) is called a symmetric alphabet.

  Regard each undirected edge of \(C\) as two directed edges, and assign
  an element of \(S\) to each directed edge. We require that, at each
  vertex \(u\in V(C)\), the \(\rho\) directed edges leaving \(u\)
  receive the elements of \(S\) exactly once, and that if a directed
  edge \(u\to v\) is assigned \(t\in S\), then the reverse directed edge
  \(v\to u\) is assigned \(t^{-1}\). Such a labeling is called an
  \textbf{inverse-consistent labeling}.
\item
  To each label \(t\in S\) assign \[
    A_t\in\operatorname{GL}_5(\mathbb F_q),
    \qquad
    b_t\in\mathbb F_q^5 ,
  \] with the inverse-symbol conditions \[
    A_{t^{-1}}=A_t^{-1},
    \qquad
    b_{t^{-1}}=A_t^{-1}b_t .
  \] This assignment is called a \textbf{route chart}. It specifies how
  fiber coordinates in \(\mathbb F_q^5\) move along the edges of \(C\).
\item
  Let \(t\in S\) be the label assigned to a directed edge \(u\to v\) of
  \(C\). For \((u,x),(v,y)\in V(C)\times\mathbb F_q^5\), join \((u,x)\)
  and \((v,y)\) by an edge if there exists \(\lambda\in\mathbb F_q\)
  such that \[
    y=A_t x+\lambda b_t .
  \] The graph obtained by lifting the edges of \(C\) to
  \(V(C)\times\mathbb F_q^5\) by this rule is called a
  \textbf{route-chart lift graph}.
\end{enumerate}

In a route-chart lift, the following two additional conditions guarantee
diameter at most \(5\).

\begin{enumerate}
\def\labelenumi{\arabic{enumi}.}
\item
  \textbf{Controller-side condition.} For any controller vertices
  \(c,d\in V(C)\), there is a nonbacktracking walk of length exactly
  \(5\) from \(c\) to \(d\). Here nonbacktracking means that the walk
  never immediately reverses the edge just traversed. A controller with
  this property is called an \textbf{exact-NB-5 controller}.
\item
  \textbf{Fiber-side condition.} A sequence of elements of \(S\) is
  called a \textbf{word}. A word \(w=t_1t_2t_3t_4t_5\) of length \(5\)
  is \textbf{reduced} if \[
    t_{i+1}\ne t_i^{-1}\qquad(1\le i<5).
  \] The fiber-side condition is that, for every reduced word
  \(w=t_1t_2t_3t_4t_5\) of length \(5\), the controllability matrix \[
    M_w=
    \bigl[
    A_{t_5}\cdots A_{t_2}b_{t_1}\mid
    A_{t_5}\cdots A_{t_3}b_{t_2}\mid
    A_{t_5}A_{t_4}b_{t_3}\mid
    A_{t_5}b_{t_4}\mid
    b_{t_5}
    \bigr]
    \in M_5(\mathbb F_q)
  \] is nonsingular. A route chart with
  this property is called a \textbf{universal route chart}.
\end{enumerate}

If the controller and fiber data satisfy these conditions, the following
proposition shows that the resulting graph has diameter at most \(5\).

\begin{proposition}[route-chart lift principle]\label{prop:lift}
Let \(q\) be a prime
power and \(s\ge1\) an integer. Let \(C\) be a
\(\rho\)-regular simple undirected graph and let \(S\) be a symmetric
alphabet with \(|S|=\rho\). Suppose that the directed edges of \(C\) are
given an inverse-consistent labeling. Suppose also that a universal
route chart on \(\mathbb F_q^s\) is given. If \(C\) is an exact-NB-\(s\) controller, then this data defines
a connected simple \(\rho q\)-regular route-chart lift graph \(G\) on \(V(C)\times\mathbb F_q^s\),
and \[
  |V(G)|=|V(C)|q^s,
  \qquad
  \operatorname{diam}(G)\le s .
\]
\end{proposition}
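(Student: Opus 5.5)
The plan is to verify in turn that the lift is well defined, simple and $\rho q$-regular, and that it has diameter at most $s$. Connectivity then follows at once, because any two vertices are at finite distance. Throughout, $M_w$ denotes the $s\times s$ analogue of the controllability matrix defined above for $s=5$.

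\textbf{Well-definedness, simplicity, regularity.} First I will check that the adjacency rule does not depend on which orientation of a controller edge is used. Suppose $u\to v$ carries $t$, so that $v\to u$ carries $t^{-1}$. The relation $y=A_tx+\lambda b_t$ is equivalent to
\[
x=A_t^{-1}y-\lambda A_t^{-1}b_t=A_{t^{-1}}y+(-\lambda)b_{t^{-1}},
\]
by the inverse-symbol conditions. So the rule read along $v\to u$ produces the same pairs.

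There are no loops, because $C$ has no loops, so every edge of $G$ joins different fibers. There are no multiple edges, because $C$ is simple, so each pair $u,v$ carries exactly one directed label.

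For regularity, note that universality forces $b_t\neq 0$ for every $t$. Indeed, $b_t$ is the last column of $M_w$ for any reduced word $w$ ending in $t$, and such words exist once $\rho\ge 2$.

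The case $\rho=1$ is excluded by the hypotheses. Then $C=K_2$, whose only nonbacktracking walk has length $1$, so no walk of length $s\ge1$ returns from $c$ to $c$ and $C$ is not exact-NB-$s$. I expect this small degenerate case to be the only place needing care.

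Since $b_t\neq0$, the $q$ values $A_tx+\lambda b_t$ with $\lambda\in\mathbb F_q$ are distinct. So $(u,x)$ has exactly $q$ neighbours in each of the $\rho$ neighbouring fibers. These fibers are distinct, which gives degree $\rho q$. The count $|V(G)|=|V(C)|q^s$ is immediate from the vertex set.

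\textbf{Diameter.} Take two vertices $(c,x)$ and $(d,y)$, and let $c=c_0,c_1,\dots,c_s=d$ be a nonbacktracking walk of length exactly $s$, with labels $t_1,\dots,t_s$. The label word $w=t_1\cdots t_s$ is reduced. If $t_{i+1}=t_i^{-1}$, then, because the out-labels at $c_i$ are a bijection onto $S$, the step $c_i\to c_{i+1}$ would be the reverse of $c_{i-1}\to c_i$. That would make $c_{i+1}=c_{i-1}$ and the walk would backtrack.

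Now lift the walk step by step with parameters $\lambda_1,\dots,\lambda_s$. Set $x_0=x$ and $x_i=A_{t_i}x_{i-1}+\lambda_ib_{t_i}$. By induction,
\[
x_s=A_{t_s}\cdots A_{t_1}x+M_w(\lambda_1,\dots,\lambda_s)^{\mathsf T}.
\]
Since $M_w$ is nonsingular, we can choose $\lambda$ so that $x_s=y$. This gives a walk of length $s$ in $G$ from $(c,x)$ to $(d,y)$, hence a path of length at most $s$.

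This covers all pairs, including $c=d$, where the exact-NB hypothesis supplies a closed nonbacktracking walk. Therefore $\operatorname{diam}(G)\le s$, and in particular $G$ is connected.
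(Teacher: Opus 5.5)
Your proof is correct and follows the paper's argument essentially step for step. Like the paper, you show orientation-independence via (A.1), get $b_t\neq0$ from universality and hence exactly $\rho q$ distinct neighbours, check that NB label words are reduced, and solve $M_w\lambda=y-A_wx$ along an exact-NB-$s$ walk. Two small points: the paper gets $b_t\neq0$ directly from the word $t^s$, which is reduced because $t^{-1}\neq t$; and your separate treatment of $\rho=1$ is unnecessary, since a fixed-point-free involution forces $|S|=\rho$ to be even.
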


The proof is given in Appendix A. The key point is that the
controller-side condition and the fiber-side condition are independent.
The controller-side condition is a property of the controller alone,
independent of the fiber. The fiber-side condition depends only on the
route chart over the symbol set \(S\), independent of the controller.

Thus the problem of constructing a large graph with fixed degree and
diameter is reduced to two independent finite design problems.

\begin{table}[ht]
\centering
\caption{The two independent design problems in the route-chart lift construction.}
\label{tab:construction}
\begin{tabular}{>{\raggedright\arraybackslash}p{0.15\textwidth}>{\raggedright\arraybackslash}p{0.38\textwidth}>{\raggedright\arraybackslash}p{0.38\textwidth}}
\toprule
Component & Role & What to verify \\
\midrule
controller & a small graph determining the coarse routing of the final graph & every pair of controller vertices is joined by a length-\(5\) nonbacktracking walk \\
\midrule
fiber chart & linear-algebraic data moving fiber coordinates along controller routes & \(M_w\) is nonsingular for every reduced word of length \(5\) \\
\bottomrule
\end{tabular}
\end{table}

In the construction of this paper, the controller, the fiber chart, and
the final graph are built as follows.

First, from the \(6\)-vertex \(4\)-regular loopless multigraph \(B\) in
Appendix~\ref{a.5-the-144-vertex-controller}, together with a voltage assignment with values in \[
  H=C_4\times C_6,
\] we obtain a \(144\)-vertex simple \(4\)-regular connected graph \(C\)
by the standard voltage lift defined in Appendix~\ref{a.2-voltage-lifts-and-the-controller}. For this \(C\),
finite computation verifies that every ordered pair of vertices
\((c,d)\) is joined by a nonbacktracking walk of length exactly \(5\).
This graph \(C\) is the controller.

Next, we give the directed edges of \(C\) an inverse-consistent labeling
by \[
  S=\{a,a^{-1},b,b^{-1}\} .
\] We also give explicit route charts on \(\mathbb F_3^5\) and
\(\mathbb F_4^5\), and verify by finite computation that the
controllability matrix is nonsingular for every reduced word of length
\(5\).

Applying \cref{prop:lift} with \(q=3\) gives a \(12\)-regular graph on \[
  144\cdot3^5=34{,}992
\] vertices. Applying it with \(q=4\) gives a \(16\)-regular graph on \[
  144\cdot4^5=147{,}456
\] vertices. In both cases the diameter is at most \(5\).

Finally, each of these vertex counts exceeds the corresponding Moore
upper bound for diameter \(4\). Hence each of the two obtained graphs
has diameter exactly \(5\), and \cref{thm:main} follows.

The construction is closely related to the existing theory of graph
lifts and voltage graphs. The controller in \S\ref{a.5-the-144-vertex-controller} is an ordinary
group-valued voltage lift. The final route-chart lift can also be
described as a lift by a permutation voltage assignment after each
directed edge of the controller is expanded into \(q\) directed edges
indexed by \(\lambda\in\mathbb F_q\). The relation between the present
construction and existing methods is discussed in \S\ref{discussion}.

\section{Dialogue environment}\label{dialogue-environment}

The search was conducted by the author through ordinary dialogue in the
standard ChatGPT web interface. The author did not build or use an
external orchestration layer around ChatGPT: no custom agent framework,
automated evaluator-driven search loop, problem-specific search engine,
or formal proof assistant was set up in advance by the author. The setup
was as follows.

\begin{itemize}
\tightlist
\item
  User interface: ChatGPT in a web browser.
\item
  Model display name: \texttt{gpt-5-5-pro} in the transcript metadata.
\end{itemize}

In this paper, ``browser-accessible'' means that the external workflow
built and used by the author was limited to the standard ChatGPT web
interface. The internal implementation of the LLM, its inference
process, and any context-management mechanisms that may have been used
on the service side are not observable. The paper therefore makes no
claim that these mechanisms were simple.

All dialogue was conducted in Japanese. The user prompts quoted in
\S\ref{the-discovery-process}--\S\ref{transcript-based-observations-on-the-discovery-process} are the author's English translations. The original Japanese
messages are preserved in the supplementary visible transcript (Appendix
B).

Because the author could not rule out account-level memory,
personalization, or other cross-session context effects, this paper does
not claim that the main transcript released here was, by itself, the
sole source of information for the search. To the extent verified by the
author, the situation was as follows.

\begin{itemize}
\tightlist
\item
  Before Round 038 of the main session released here, other sessions
  contained exchanges on the degree/diameter problem for
  \((\Delta,D)=(8,2),(6,4)\) and on the degree-\(57\) Moore graph, as
  well as exchanges on other mathematical tasks.
\item
  However, no other session before Round 038 was confirmed to contain a
  general degree/diameter construction or the route-chart lift method
  ultimately used in this paper.
\item
  After Round 038, a separate session was started to verify the first
  lower-bound-improvement candidate obtained in Round 036.
\item
  After Round 042, in which the creation of handoff materials for other
  sessions was requested, two separate sessions in the same account
  began, in parallel, a search for generalizations of the result.
\item
  Neither separate session reached either the ideas that appeared in
  Rounds 049--050 of the released main session or the final lower bounds
  shown in \S\ref{the-degreediameter-problem-and-the-main-result}. The released main session was the first to reach the
  present result.
\end{itemize}

\section{The discovery process}\label{the-discovery-process}

The observations below concern Rounds 001--052 of the ChatGPT session,
conducted from 22 to 27 May 2026. These are the rounds relevant to the
present discovery and verification. They include the initial literature
survey, construction attempts, failure reports, changes of direction,
code generation, and verification results. Unrelated exchanges from
Round 053 onward are excluded from the analysis and from the
supplementary transcript.

The dialogue log was extracted from the conversation log downloaded
through the official ChatGPT export function. The supplementary script
\texttt{dump\_ddp\_visible.py} restricts the extraction to
browser-visible user prompts and assistant answers. In this paper, one
user prompt together with one or more responses to it is organized as a
round of the transcript. The full visible transcript of Rounds 001--052
is included in the supplementary materials. The structure of the
supplementary materials is summarized in Appendix B.

\subsection{The initial prompt}\label{the-initial-prompt}

The dialogue began in Round 001 with the author giving a high-level goal
of roughly the following form.

\begin{itemize}
\tightlist
\item
  Aim to set a new record for the degree/diameter problem within
  \(\Delta\le16\), \(D\le10\).
\item
  First, survey the relevant literature.
\item
  Then attempt a construction based on an idea not essentially contained
  in the literature.
\item
  Avoid mere combinations of existing methods, parameter increases,
  substitutions of the algebraic system, and non-novel local search
  around existing solutions.
\item
  During the idea-generation and construction phases, refrain from web
  search in principle.
\item
  The LLM may choose \(\Delta,D\) based on the suitability of its idea.
\end{itemize}

Except for specifying the target range of degree and diameter, the
initial prompt consisted of fairly general research instructions. It
asked the model to first conduct a literature survey, to attempt a proof
using an idea not essentially contained in the literature, and to
suppress web search during the ideation and construction stages. The
examples in the fourth item are specific to the degree/diameter problem,
but they were intended only to clarify what was meant by ``not
essentially contained'' in the literature.

Within the visible transcript analyzed in this paper, the author did not
directly specify the concrete components or candidate families of the
final construction. Instead, the author provided general constraints and
acceptance criteria, including differentiation from the literature,
suppression of web search, withdrawal from local repair, stopping and
reorganization, and requests to explain prospects of success in advance.
Candidate generation itself was left mainly to the LLM.

\subsection{Initial exploration}\label{initial-exploration}

In the initial exploration from Round 001 to Round 021, many ideas were
examined, including what the transcript labeled as non-regular
permutation lifts, thinning of finite geometries, matching contraction,
Hashimoto / de Bruijn type constructions, rank-controller type
constructions, random regular graphs, and affine Schreier cover type
constructions. Most of these did not ultimately lead to a certificate.

\subsection{The first essential transition: route as coordinate
chart}\label{the-first-essential-transition-route-as-coordinate-chart}

The first substantive progress occurred in Round 023, when the principle
of the controllable route-chart lift appeared. This principle is the
core of the final construction outlined in \S\ref{outline-of-the-construction-controller-and-fiber}. In Rounds 022 and 023,
the user requested ``an a priori mathematical reason to expect success''
and asked the LLM ``to devise a new principle.'' In Round 023, the LLM
presented the following view.

\begin{quote}
A base/controller walk is not merely a displacement. A controller route
of length \(k\) should be a coordinate chart that covers all of
\(\mathbb F_q^k\) through the \(q^k\) port choices of the fiber.
\end{quote}

To explain this abstraction, we first recall the standard group-valued
voltage-lift point of view (\S\ref{a.2-voltage-lifts-and-the-controller}). In an ordinary voltage lift, one attaches a
group element to each edge of the base graph. Choosing a single path on
the base graph from a start vertex to an end vertex determines a single
product or sum of group elements along that path, and hence a single
displacement in the fiber. Thus, to reach many fiber targets, many base
paths must produce sufficiently different displacements. Even if the
total number of short paths is large, some targets remain unreachable if
their displacements coincide too often.

The route-chart lift point of view is different. Even after fixing a
single route on the controller, one may choose
\(\lambda_i\in\mathbb F_q\) at each step. For a route of length \(5\),
the available control sequences are \[
  (\lambda_1,\ldots,\lambda_5)\in\mathbb F_q^5,
\] of which there are \(q^5\). If the map from these sequences to the
final fiber displacement is a linear isomorphism, then a single
controller route is not merely a single move. It parametrizes all target
points in the fiber.

This idea decomposed the search for the huge final graph into two finite
design problems.

\begin{enumerate}
\def\labelenumi{\arabic{enumi}.}
\tightlist
\item
  \textbf{Controller problem.} Construct a small \(4\)-regular graph
  that joins any pair of controller vertices by a nonbacktracking walk
  of length exactly \(5\).
\item
  \textbf{Fiber chart problem.} Construct a chart over a finite field
  such that, for every reduced word of length \(5\), the corresponding
  \(5\times 5\) controllability matrix is nonsingular.
\end{enumerate}

This decomposition is central to the discovery process. Rather than
directly guessing the whole final graph, the LLM reformulated the task
as the separate construction of a small controller-side graph design and
a finite list of fiber-side linear-algebraic conditions.

\subsection{Success of the fiber chart and the controller
bottleneck}\label{success-of-the-fiber-chart-and-the-controller-bottleneck}

In the proof of concept in Round 024, affine control data were found
over \(\mathbb F_4^4\) such that the controllability matrix associated
with every reduced word of length \(4\) had full rank. This confirmed
that the fiber-chart principle could hold.

Constructing the corresponding controller was harder. For example,
targeting \((16,4)\) requires a \(4\)-regular controller with at least
\(58\) vertices in which all ordered pairs are joined by a
nonbacktracking walk of length exactly \(4\). In Rounds 026--033, many
cyclic voltage lifts, direct searches, and near-controller repairs were
tried, but none reached a certificate.

The important observation at this stage was the identification of a
bottleneck. For the shortest \((16,4)\) target, the controller-side
constraints were too tight.

\subsection{Transition to diameter 5 and the first complete
candidate}\label{transition-to-diameter-5-and-the-first-complete-candidate}

In response to this bottleneck, the search moved to \(s=5\) from Round
034 onward. In Rounds 034--035, the examination of controller candidates
continued with the length extended. Increasing the length to \(5\) gave
more room for controller design, and in Round 036 the first complete
candidate was obtained by combining a universal chart on
\(\mathbb F_4^5\) with a \(132\)-vertex exact-NB-5 controller: \[
  \Delta=16,
  \qquad
  D\le5,
  \qquad
  n=132\cdot4^5=135{,}168 .
\] This was the first candidate graph produced from the route-chart
principle to improve a degree/diameter lower bound.

\subsection{Post-completion generalization search and the update to
the final
controller}\label{post-completion-generalization-search-and-the-update-to-the-final-controller}

In Rounds 037--052, after the first complete candidate had been
obtained, a generalization search was carried out, and the
\(132\)-vertex controller from Round 036 was replaced by the final
\(144\)-vertex controller.

First, in Round 039, combining the same \(132\)-vertex controller with
the \(\mathbb F_3^5\) chart yielded a \((12,5)\) candidate on
\(32{,}076\) vertices. In Rounds 040--048, charts over
\(\mathbb F_5^5\), \(\mathbb F_5^4\), and \(\mathbb F_2^5\), cubic
controllers, and extensions to diameters \(4\) and \(6\) were also
attempted. In many of these attempts, the chart-side condition was
achieved, but no corresponding controller was found.

In Round 049, the user requested a focus on ``ideas for which there is
an a priori mathematical reason to expect that they have the property
currently desired.'' The LLM narrowed the search to a controller
strategy based on \(C_4\times C_6\). In the subsequent Round 050, a
\(144\)-vertex exact-NB-5 controller was obtained from a \(6\)-vertex
\(4\)-regular multibase and \(C_4\times C_6\) voltage coverage.
Composing it with the \(\mathbb F_4^5\) chart yielded \[
  N(16,5)\ge147{,}456 .
\] Then, in Round 052, composing the same \(144\)-vertex controller with
the \(\mathbb F_3^5\) chart yielded \[
  N(12,5)\ge34{,}992 .
\]

It is therefore natural to view the discovery process as having at least
two stages. In the first stage, the route-chart lift construction
principle emerged in Round 023 and reached the first complete candidate
in Round 036. In the second stage, after the post-completion
generalization search encountered a controller bottleneck, the stopping
and reorganization in Rounds 047--050 led to the final \(C_4\times C_6\)
controller.

\section{Transcript-based observations on the discovery
process}\label{transcript-based-observations-on-the-discovery-process}

The analysis in this section is process tracing: it describes changes of
direction, the role of failures, and the emergence of abstraction as
they are observable in the visible transcript. This paper contains no
controlled experiment. Accordingly, this section does not claim how much
any single element contributed to the discovery.

\subsection{Record of unsuccessful approaches and changes of
direction}\label{record-of-unsuccessful-approaches-and-changes-of-direction}

The dialogue log contains many unsuccessful search intervals. These
intervals include the Round 002 attempt to repair reachability by
changing one of the maps assigned to an edge, attempts to modify highly
structured candidate graphs (roughly Rounds 004--015), and the direction
that the LLM tentatively called the ``Hashimoto--de Bruijn packet
construction'' (Rounds 017--018). None of these unsuccessful searches is
directly contained in the final construction. We cannot go so far as to
say that these failures were exploited and produced the final
construction. However, the interpretations the LLM gave of those
failures, and the searches it conducted afterward, can be observed from
the transcript.

A recurring pattern in the early part of the transcript is caution about
improving a nearby candidate. This appears in two forms. The first is
after-the-fact local repair: after a defect has been found, one changes
a small part of the candidate in order to repair that defect. For
instance, in Round 002, the LLM reported that changing one of the maps
assigned to an edge in order to make a particular pair reachable could
break other reachability relations. The second is a more structured
variant: one starts from a highly structured candidate graph and
modifies it in a small or uniform way. In several such attempts, the
reported candidates were close in a numerical sense, but the remaining
obstructions were not localized. The point recorded here is not the
specific structure of those candidates, but the reported failure
pattern. The obstruction did not appear as a small number of defects
that could be removed by a minor adjustment.

In the ``Hashimoto / de Bruijn type packet'' ideas, the obstacle was
formulated as the question of how to carry, within a limited number of
steps, enough information to distinguish each target. In Round 018, the
LLM stated that rather than continuing to repair this direction, one
needed first to design how all targets would be distinguished, as in a
coded multi-window packet or a covering-array quotient.

Thus the transcript contains statements in which the LLM verbalized
missing conditions in several unsuccessful approaches and, in later
searches, appeared to prioritize approaches with a provable covering
principle or a finite certificate over the policy of repairing a nearby
candidate.

\subsection{Constraints contained in the initial
prompt}\label{constraints-contained-in-the-initial-prompt}

The initial prompt in Round 001 contained three kinds of constraints.

\begin{enumerate}
\def\labelenumi{\arabic{enumi}.}
\tightlist
\item
  A clear evaluation goal: obtaining a new record.
\item
  A novelty constraint demanding ideas not essentially contained in the
  relevant literature.
\item
  A constraint suppressing web search during the ideation and
  construction stages.
\end{enumerate}

In the transcript, this combination appears as pressure toward the
search for a new representation rather than a mere re-implementation of
known constructions. As noted in \S\ref{outline-of-the-construction-controller-and-fiber}, however, each element of the
final construction is clearly connected to existing methods. Relative to
existing lift and voltage constructions, the difference emphasized in
this paper does not lie in the individual tools themselves. It lies in
decomposing those tools into controller-side exact-NB-5 coverage and a
fiber-side universal route chart, and then integrating the two parts
into a finite certificate for a diameter proof in the degree/diameter
problem. This point is discussed in detail in \S\ref{discussion}.

\subsection{The user's meta-level interventions and representative
exchanges}\label{the-users-meta-level-interventions-and-representative-exchanges}

In the visible transcript, the author's meta-level interventions
occurred near several junctures at which the level of abstraction of the
search changed. The interventions referred to here did not supply
components of the final construction. They were instructions about how
to proceed with the search, when to stop, novelty requirements, and
requests to explain the prospects of success in advance.

The interventions can be divided broadly into three types. The first is
a stop-and-organize type intervention, such as asking the LLM to pause
and reconsider the direction. The second is a new-principle-focused
intervention, such as asking it to devise a new principle or to
concentrate on mathematically new ideas. The third is an
a-priori-prospect intervention, such as asking it to adopt a direction
for which there is a mathematical reason to expect success.

The following table summarizes exchanges that are especially important
for reading the discovery process. User utterances are quoted briefly,
while the GPT side is summarized.

{\small
\setlength{\LTleft}{0pt}\setlength{\LTright}{\fill}
\begin{longtable}{>{\raggedright\arraybackslash}p{0.06\textwidth}>{\raggedright\arraybackslash}p{0.27\textwidth}>{\raggedright\arraybackslash}p{0.33\textwidth}>{\raggedright\arraybackslash}p{0.20\textwidth}}
\caption{Representative exchanges illustrating the user's meta-level interventions during the search (Section~5.3). User utterances are quoted briefly; the GPT side is summarized.}\label{tab:interventions}\\
\toprule
Round & User prompt (excerpt) & Summary of GPT response & Author's comment \\
\midrule
\endfirsthead
\toprule
Round & User prompt (excerpt) & Summary of GPT response & Author's comment \\
\midrule
\endhead
\bottomrule
\endlastfoot
001 & ``an idea not essentially contained in the literature''; ``do not use web search, in principle'' & After surveying the table of known records, states that it will try a construction that is not an extension of existing methods & Gave general constraints on the search as an initial condition \\
\midrule
003 & ``I do not feel that a good graph can be obtained by local improvement'' & Acknowledges that random lift repair is weak as the main strategy for a large new record, and switches to a construction whose diameter proof closes without repair & Appears to have curbed the drift toward local repair \\
\midrule
017 & also prohibits ``simply trying constructions from methods or concepts a mathematician would think of immediately'' & States that it will ``rebuild the construction principle itself, rather than improving candidates,'' and makes the waste of the distance itself the object of design & A point at which the novelty constraint was explicitly strengthened \\
\midrule
022 & requests concentration on ``ideas with a reason to expect efficient connectivity in advance'' & Discards the ``projective rank-controller direction'' as lacking an a priori basis, and says that a new principle with an explicit covering identity is needed & A point at which the evaluation axis moved from mere near misses to a provable covering principle \\
\midrule
023 & ``please devise a new principle'' & Proposes the controllable route-chart lift, explaining that a single route together with \(q^5\) controls covers the whole fiber & The point at which the abstraction underlying the final construction was first made explicit \\
\midrule
026 & ``For the current 58-vertex controller problem, existing ideas may be used, provided that there is a mathematical reason to expect success.'' & Treats existing voltage-lift ideas as admissible for the isolated controller subproblem, but only under an a-priori-success criterion & Clarified the boundary between the novelty constraint and the controlled use of existing tools \\
\midrule
036 & ``please continue'' & From a \(132\)-vertex exact-NB-5 controller and an \(\mathbb F_4^5\) chart, obtains the first complete candidate \((16,5,135{,}168)\) & A cautionary example showing that large progress can occur even after a simple continuation instruction \\
\midrule
047 & ``please pause and review the direction'' & Summarizes that the generalization search is drifting toward trial-and-error controller search, and proposes a ``sumset-certified controller'' & A point at which the search after the first complete candidate was re-abstracted \\
\midrule
049--050 & A renewed request in Round 049 to avoid ``methods with no mathematical reason to expect success,'' followed by a brief continuation instruction in Round 050 & Focuses the controller search on a \(C_4\times C_6\)-based voltage-lift design, and in Round 050 obtains the final \(144\)-vertex exact-NB-5 controller, yielding the \(147{,}456\)-vertex construction & The final controller update \\
\end{longtable}
}

For Rounds 049--050, the constraint introduced in Round 049 was to
narrow the search to directions with an a priori prospect of success.
This constraint was continued by the brief continuation instruction in
Round 050. For this reason, the paper treats Rounds 049--050 as a single
unit of progress.

The analysis treats Round 023, Round 036, and Rounds
049--050, together with their surrounding exchanges, as focal points.
These are the rounds in which the route-chart principle, the first
complete candidate, and the final controller update respectively
appeared.

Among these, Round 023 and Rounds 049--050 are especially salient in
relation to the user's meta-level interventions. In Round 023, an
explicit intervention requesting a ``new principle'' immediately
preceded the response in which the route-chart lift first appeared. This
route-chart lift became the core of the final construction. In Rounds
049--050, the post-completion generalization search had already
encountered several controller bottlenecks. The request in Round 049 to
concentrate on ``ideas for which there is an a priori reason to expect
that they have the property currently desired'' was followed by a focus
on the \(C_4\times C_6\) type construction used in the final
construction. The brief continuation instruction in Round 050 then led
to the \(C_4\times C_6\) controller.

By contrast, the first complete candidate in Round 036 was obtained
after eight consecutive rounds of brief continuation instructions,
including Round 036 itself.

At the time of Rounds 049--050, however, a separate ChatGPT session was
attempting in parallel to generalize the result around Round 036.
Therefore, although the final idea itself was not obtained in the
separate session, this paper does not exclude the possibility that
verification results, failure information, or the search state in that
separate session indirectly contributed to the progress of Rounds
049--050.

The user's prompts can be broadly classified into three groups. The
first group set the goal of the search, as in Rounds 001, 039, and 052.
The second requested an explanation of results or output to a separate
file, as in Rounds 028, 037, 038, 042, and 051. The third requested
continuation of the search.

There were \(44\) continuation requests. Of these, \(31\) (about
\(70\%\)) added no new mathematical constraint or policy and simply
prompted continuation of the current search with phrases such as
``please proceed'' or ``please continue.'' These occurred in Rounds 002,
005, 006, 008, 010, 012, 013, 014, 015, 016, 018, 019, 020, 021, 024,
029, 030, 031, 032, 033, 034, 035, 036, 040, 041, 043, 044, 045, 046,
048, and 050. The remaining \(13\) continuation requests added
meta-interventions involving the addition, modification, or relaxation
of constraints. The counts are based on the author's classification of
the visible user prompts.

\section{Limitations}\label{limitations}

This paper reports a single case. It does not show that ChatGPT can in
general perform similar mathematical discovery with high probability,
nor that the present method is generally effective. Evaluating the success
rate of this strategy or the effectiveness of meta-level interventions
would require multiple independent runs and ablation studies.

The analysis in this paper is limited to the visible transcript and the
artifacts. We do not observe the LLM's internal reasoning, hidden
chain-of-thought, or the sources actually referenced by the model. Thus,
regarding what the LLM relied on and how it reasoned, we can only infer
from its output proposals, code, and changes of direction.

Finally, the scope of the process analyzed in this paper should be
distinguished from the preparation of Appendix A. Appendix A was not
generated solely by the LLM used in the session analyzed here. It was prepared by the author from files generated in the observed
session, together with later LLM-assisted drafts produced using Claude
Opus 4.7 and Claude Opus 4.8 through Claude Code, and using ChatGPT Pro.
The author then edited this material. The process analyzed in this paper is the process leading to the
construction of the solution, including the generation of the concrete
graph edge lists in the supplementary materials. It does not
include the later process of organizing the mathematical justification
of that construction into the proof exposition given in Appendix A.

\section{Discussion}\label{discussion}

\subsection{Novelty of the graph construction
method}\label{novelty-of-the-graph-construction-method}

The existing framework closest to the present construction is that of
voltage graphs and graph coverings. In a voltage assignment, one assigns
a group element to each directed edge of the base graph, and the product
or sum along a walk describes the movement of the fiber coordinate in
the lift. In the degree/diameter problem as well, voltage-assignment
methods for constructing a large lift from a small base graph have been
widely used \cite{BrankovicEtAl1998a, LozSiran2008}. The controller
certificate in this paper belongs to the same lineage as this standard
voltage-lift idea. Indeed, the \(144\)-vertex controller is the
\(C_4\times C_6\) voltage lift of a \(6\)-vertex multibase, and its
exact-NB-5 property is verified by checking that the voltage sums of
nonbacktracking walks of length \(5\) cover all target voltages.

The final route-chart lift, however, is not the ordinary group-valued
voltage lift of the original controller \(C\). In an ordinary voltage
lift, a single directed edge of the base determines a single
permutation, or a single group displacement, on the fiber. In this
paper, by contrast, a controller edge with label \(t\) is associated
with the \(q\) affine maps \[
  x\mapsto A_t x+\lambda b_t\qquad(\lambda\in\mathbb F_q).
\] Formally, this can be regarded as a permutation voltage lift over the
multigraph obtained by expanding each edge of the controller into \(q\)
control-indexed parallel edges \cite{GrossTucker2001, DalfoEtAl2021}.
From this point of view, the novelty of the construction lies in
choosing the affine permutation voltages so that every reduced route of
length \(5\) parametrizes the entire fiber, and in verifying this fact
by a finite list of rank certificates.

Concretely, writing the label sequence of a fixed controller route as
\(w=t_1\cdots t_5\), the endpoint of the fiber coordinate is given by \[
  x_5=A_wx_0+
  M_w(\lambda_1,\ldots,\lambda_5)^T .
\] Thus the nonsingularity of \(M_w\) means that the single controller
route has a unique control sequence reaching any prescribed target fiber
coordinate. This route-wise controllability condition separates the
diameter check for the huge final graph into the controller-side exact
route coverage and the fiber-side rank condition over all reduced words.
This separation is central to the certificate design in this paper.

\subsection{Decomposition of search execution and search
control}\label{decomposition-of-search-execution-and-search-control}

The search strategy used by the author was to have the LLM carry out
candidate generation and mathematical reasoning, while the author
controlled the hierarchy of the search, stopping decisions, and
acceptance criteria through general interventions.

This can be understood as a working hypothesis that divides the search
into an ``execution process,'' which advances the search within the
problem, and a ``control process,'' which adjusts the granularity and
acceptance criteria of the search from outside the problem. This paper
does not test the effectiveness of this decomposition. It records
instances, including Round 023 and Rounds 049--050, where control-type
interventions and increases in the level of abstraction occur in close
proximity.

\subsection{Relation to recent
cases}\label{relation-to-recent-cases}

Apart from the system-oriented methods listed in \S\ref{introduction}, there have recently
been reports of research-level mathematical results obtained solely
through dialogue with a browser-accessible LLM ordinarily available to
researchers, without a dedicated search engine, formalizer, or
orchestration \cite{Bubeck2025, Salim2025, Verbeken2026, Gowers2026,
Howlett2026}. In particular, \cite{Verbeken2026} is a single-case study
that recorded and analyzed the resolution of an open conjecture by a
consumer LLM across multiple dialogue sessions. It also released the
related artifacts. Methodologically, it is closest to Contribution 2 of
this paper.

Several reports suggest that browser-accessible LLMs can contribute to
new mathematical results. What is especially relevant here is the nature
of human involvement. These reports are not uniform in the role assigned
to the human participant. At least within the released and recorded
materials, some describe processes in which mathematically concrete
directions or pointers were supplied through prompts before the final
result was reached \cite{Salim2025, Verbeken2026}. Others explicitly
state that the human made no essential mathematical contribution, and
that the human role was limited to guidance such as posing the problem,
continuing the search, and requesting confirmation of correctness
\cite{Gowers2026}. The materials released in \cite{Gowers2026}, however,
consist mainly of artifacts and a narrative record, rather than a long
transcript of the kind released with this paper. Moreover, that
case is described as a relatively short exchange on the order of an hour or two, so the time scale of the record differs from the \(52\)-round
search treated in this paper.

With respect to the nature of human involvement, the case reported here
is closer to the latter type than to the former. The author's
contribution was concentrated on controlling the search process rather
than on supplying concrete components of the construction. This control
included demanding novelty, stopping and re-abstracting, and requesting
explanations of the prospects of success in advance (\S\ref{the-discovery-process}, \S\ref{transcript-based-observations-on-the-discovery-process}).

The difference from \cite{Gowers2026} lies instead in the form and time
scale of the record. This paper releases a visible transcript spanning
\(52\) rounds, including unsuccessful search intervals. It also records,
as process tracing, how structured meta-level interventions corresponded
in time to changes in the level of abstraction. These interventions
include stopping and re-abstracting, requesting novelty, and asking for
prospects of success in advance.

Thus, beyond the absence of concrete construction hints
from the author, the distinctive features of this paper are its time
scale and its mode of recording the interaction.

\subsection{Future work}\label{future-work}

This work suggests three perspectives to consider when LLM agents are
used for automatic search on general problems. The first concerns the
role of search control. In the present case, this role was played by the
human author. It remains to be studied how well the search would proceed
if this role were delegated instead to another LLM agent.

The second concerns the separation between search execution and search
control. One possible design is to let a single LLM agent both advance
the search within the problem and adjust the granularity and acceptance
criteria of the search. Another is to separate these two processes
explicitly and assign them to different LLM agents. It remains unclear
what theoretical and practical differences would arise between these
designs.

The third concerns the comparison between lightweight control loops and
dedicated systems. When a powerful foundation model is guided only by a
lightweight control loop, either a human controller or a second LLM, it
is natural to ask how close such a workflow can come to the level
attainable by dedicated systems such as FunSearch
\cite{RomeraParedes2024} or AlphaProof \cite{Hubert2026}. It also remains
to understand in which situations the lightweight-control approach and
the dedicated-system approach are respectively preferable.

This paper provides only a single data point showing that a mathematical
result can be reached through a lightweight workflow without a dedicated
search system. It also remains unclear how far the observations
generalize to other fields requiring exploratory thinking. These
questions are left for future work.

\section{Conclusion}\label{conclusion}

This paper presented a construction proving the new lower bounds \[
  N(12,5)\ge34{,}992,
  \qquad
  N(16,5)\ge147{,}456
\] for the degree/diameter problem. It also gave an account of the
approximately six-day search process through which the construction was
discovered in dialogue with ChatGPT via its standard web interface.

The transcript contains
several points at which the level of abstraction of the search appears
to change after user prompts imposing general constraints. Evaluating
the effectiveness of such control and constraints, and the extent to
which similar methods can handle difficult problems more broadly,
remains a topic for future work.

\section*{Acknowledgments}\label{acknowledgments}

The author thanks Jaeseung Han, Yawara Ishida, and Daisuke Takahashi for
helpful discussions and comments on an earlier draft of this paper, and
for valuable comments on the direction of this work.

\appendix

\section{Mathematical construction and
proofs}\label{appendix-a.-mathematical-construction-and-proofs}

This appendix collects the mathematical constructions needed to prove
\cref{thm:main}.

\subsection{Multigraphs and nonbacktracking
walks}\label{a.1-multigraphs-and-nonbacktracking-walks}

For a finite set \(V\), write \[
  \binom{V}{2}:=\bigl\{\{u,v\}:u,v\in V,\ u\ne v\bigr\} .
\] This is the set of all two-element subsets of \(V\).

\begin{definition}\label{def:multigraph}
A \emph{loopless multigraph} is a triple
\(B=(V,E,\partial)\) where \(V\) and \(E\) are finite sets and
\(\partial:E\to\binom{V}{2}\) is a map. When \(\partial(e)=\{u,v\}\) for
\(e\in E\), we say that \(e\) is an edge joining \(u\) and \(v\).
Distinct edges \(e,e'\in E\) with the same endpoint set \(\{u,v\}\) are
allowed. On the other hand, by the definition of \(\binom{V}{2}\),
\(\partial(e)\) always consists of two distinct vertices, so there are
no loops.
\end{definition}

\begin{definition}\label{def:walk}
A walk of length \(\ell\) in
\(B=(V,E,\partial)\) is a sequence \[
  P=(v_0,e_1,v_1,e_2,\ldots,e_\ell,v_\ell)
\] with \(v_i\in V\), \(e_i\in E\) and \(\partial(e_i)=\{v_{i-1},v_i\}\)
for all \(1\le i\le\ell\). This walk is \emph{nonbacktracking} (NB) if
\[
  e_{i+1}\ne e_i\qquad(1\le i<\ell)
\] is satisfied. For simple graphs, this coincides with the usual
condition \(v_{i+1}\ne v_{i-1}\).
\end{definition}

\subsection{Voltage lifts and the
controller}\label{a.2-voltage-lifts-and-the-controller}

Voltage graphs and regular coverings are standard construction tools. In
this paper we use voltage assignments with values in a finite abelian
group \cite{GrossTucker2001}. In the degree/diameter problem, the
method of giving a voltage assignment to a small base graph to build a
large lift, and reducing the diameter condition to a condition on walks
in the base graph and net voltages, has been used \cite{BrankovicEtAl1998a,
LozSiran2008}.

Let \(B=(V,E,\partial)\) be a loopless multigraph. For each edge
\(e\in E\), with \(\partial(e)=\{u,v\}\), consider the two orientations
\((u,e,v)\) and \((v,e,u)\). We call these the directed edges of \(B\),
and write \(\overrightarrow E(B)\) for the set of all directed edges. We
write the reverse of a directed edge \(\vec e=(u,e,v)\) as
\(\overline{\vec e}=(v,e,u)\).

\begin{definition}\label{def:voltage}
Let \(H\) be a finite abelian group. An
\(H\)-valued voltage assignment is a map \[
  \gamma:\overrightarrow E(B)\to H
\] satisfying, for every directed edge \(\vec e\), \[
  \gamma(\overline{\vec e})=-\gamma(\vec e) .
\] The group element \(\gamma(\vec e)\) assigned to a directed edge
\(\vec e\) is called the voltage of that directed edge.
\end{definition}

The regular lift \(B^\gamma\) of a voltage-equipped multigraph
\((B,\gamma)\) is the following, not necessarily simple, undirected
multigraph. The vertex set is \[
  V(B^\gamma)=V(B)\times H .
\] For each edge \(e\in E(B)\), choose one of its two orientations, say
\(\vec e=(u,e,v)\). For each \(h\in H\), add an undirected edge, with
edge id \((e,h)\), joining \[
  (u,h)
  \quad\text{and}\quad
  (v,h+\gamma(\vec e)).
\] This definition is independent of the chosen orientation. Indeed, if
the reverse orientation is used and \(h'=h+\gamma(\vec e)\), then \[
  h=h'+\gamma(\overline{\vec e}),
\] so the same undirected edge is obtained.

Given a walk \(P=(v_0,e_1,v_1,\ldots,e_\ell,v_\ell)\) and an initial
value \(h_0\in H\), let the directed edge of the \(i\)-th step be
\(\vec e_i=(v_{i-1},e_i,v_i)\) and set \[
  h_i=h_{i-1}+\gamma(\vec e_i) .
\] Then \(h_\ell-h_0=\sum_{i=1}^\ell \gamma(\vec e_i)\) is called the
voltage sum of \(P\).

\begin{lemma}\label{lem:coset}
Let \((B,\gamma)\) be an \(H\)-valued
voltage-equipped loopless multigraph, and set \(C=B^\gamma\). Assume
that for some \(s\ge1\) the following holds. \[
\tag{A.0}
  \forall i,j\in V(B),\ \forall h\in H:\ 
  \text{there is a length-}s\text{ NB walk }i\to j\text{ in }B\text{ with voltage sum }h.
\] Then, for any \((c,d)\in V(C)^2\), there exists a nonbacktracking
walk of length exactly \(s\) from \(c\) to \(d\) in \(C\).
\end{lemma}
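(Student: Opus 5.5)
The plan is to lift walks directly. Write \(c=(i,h_0)\) and \(d=(j,h_1)\) with \(i,j\in V(B)\) and \(h_0,h_1\in H\). Apply (A.0) to the pair \(i,j\) and the group element \(h:=h_1-h_0\). This gives a nonbacktracking walk
\[
  P=(v_0,e_1,v_1,\ldots,e_s,v_s)
\]
in \(B\) with \(v_0=i\), \(v_s=j\) and voltage sum \(h_1-h_0\). Let \(\vec e_k=(v_{k-1},e_k,v_k)\), and define the lifted vertex sequence by
\[
  (v_0,g_0),(v_1,g_1),\ldots,(v_s,g_s),\qquad g_0=h_0,\quad g_k=g_{k-1}+\gamma(\vec e_k).
\]
By the definition of \(B^\gamma\), consecutive lifted vertices are joined by the edge with id \((e_k,g_{k-1})\) if the fixed orientation of \(e_k\) is \(\vec e_k\). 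Otherwise they are joined by the edge with id \((e_k,g_k)\); this is the orientation-independence remark preceding the lemma. So we get a walk \(\tilde P\) of length \(s\) in \(C\) from \((i,h_0)\) to \((j,h_0+h)=(j,h_1)=d\).

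It remains to check that \(\tilde P\) is nonbacktracking. Suppose two consecutive lifted edges coincide, i.e.\ \(\tilde e_{k+1}=\tilde e_k\). The covering map \((e,g)\mapsto e\) sends edge ids of \(C\) to edges of \(B\), and it sends \(\tilde e_k\) to \(e_k\). Hence \(e_{k+1}=e_k\), which contradicts the nonbacktracking property of \(P\). So \(\tilde P\) is nonbacktracking in the edge-id sense of Definition~\ref{def:walk}.

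The one step needing care is that the lemma is applied to the controller \(C\) as a simple graph. There, nonbacktracking means \(v_{k+1}\ne v_{k-1}\), and the edge-id version alone does not give this. If \(C\) is simple, as the paper asserts for the controller, then distinct edge ids join distinct vertex pairs. In that case \(\tilde e_{k+1}\ne\tilde e_k\) is equivalent to the lifted vertex after the next step differing from the lifted vertex before the previous step, and the two notions agree. I would state the lemma for \(B^\gamma\) as a multigraph in the edge-id sense, and then note this equivalence in the simple case. The only other point is the bookkeeping of orientations, handled by the remark already given; no real obstacle is expected.
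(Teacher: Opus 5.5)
Your proposal is correct and follows the paper's proof. The paper likewise writes \(c=(i,h_1)\), \(d=(j,h_2)\), takes the NB walk from (A.0) with voltage sum \(h_2-h_1\), lifts it from \(h_1\), and notes that the edge-id NB condition survives lifting. Your orientation bookkeeping, the projection argument, and the remark that edge-id NB agrees with \(v_{k+1}\ne v_{k-1}\) when \(C\) is simple (cf.\ Definition~\ref{def:walk}) only make explicit what the paper leaves implicit.
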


\begin{proof}
Write \(c=(i,h_1)\), \(d=(j,h_2)\). Applying (A.0) to
\(h=h_2-h_1\), there is a length-\(s\) NB walk \(P\) in \(B\) from \(i\)
to \(j\) whose voltage sum is \(h_2-h_1\). Lifting \(P\) from the
initial value \(h_1\) by the rule above, the second component of the
endpoint is \(h_1+(h_2-h_1)=h_2\). The NB condition is the condition of
not immediately returning along the same edge id, and is therefore
preserved after lifting. Hence a length-\(s\) NB walk from \(c\) to
\(d\) is obtained in \(C\).
\end{proof}

A simple graph \(C\) with the above property is called an
\textbf{exact-NB-\(s\) controller}.

\subsection{Symmetric alphabets and inverse-consistent
labelings}\label{a.3-symmetric-alphabets-and-inverse-consistent-labelings}

\begin{definition}\label{def:symmalpha}
When a map \(\iota:S\to S\) on a finite set
\(S\) satisfies \(\iota(\iota(t))=t\) for all \(t\in S\), \(\iota\) is
called an involution. If furthermore \(\iota(t)\ne t\) holds for all
\(t\in S\), then \(\iota\) is said to be fixed-point-free. In this
paper, a finite set \(S\) equipped with a fixed-point-free involution is
called a symmetric alphabet, and we write \(\iota(t)\) as \(t^{-1}\).
\end{definition}

A word \(w=t_1\cdots t_s\) of length \(s\) is \emph{reduced} if \[
  t_{i+1}\ne t_i^{-1}\qquad(1\le i<s).
\]

Let \(C\) be a \(\rho\)-regular simple undirected graph and \(S\) a
symmetric alphabet with \(|S|=\rho\). Write \(\overrightarrow E(C)\) for
the set of all directed edges of \(C\). A map
\(\ell:\overrightarrow E(C)\to S\) is an \emph{inverse-consistent
labeling} if it satisfies the following two conditions.

\begin{enumerate}
\def\labelenumi{\arabic{enumi}.}
\tightlist
\item
  For each vertex \(u\in V(C)\), the labels of the \(\rho\) directed
  edges leaving \(u\) take each element of \(S\) exactly once.
\item
  For any directed edge \(u\to v\), \(\ell(v\to u)=\ell(u\to v)^{-1}\).
\end{enumerate}

Then the label sequence of any NB walk in \(C\) is a reduced word. Let
two consecutive directed edges of a walk be \(c_{i-1}\to c_i\) and
\(c_i\to c_{i+1}\), and let the label of the former be \(t\). By
condition 2, the label of the reverse directed edge \(c_i\to c_{i-1}\)
is \(t^{-1}\). By condition 1, there is exactly one directed edge
leaving \(c_i\) with label \(t^{-1}\), so if the next label is
\(t^{-1}\) then \(c_i\to c_{i+1}\) is the directed edge that reverses
the previous edge. This contradicts the NB condition.

By Petersen's 2-factor theorem \cite{Petersen1891}, an even-regular
simple undirected graph admits a 2-factor decomposition. Assign a
distinct inverse pair \(\{t,t^{-1}\}\) of \(S\) to each 2-factor. Choose
an orientation of each cycle, and assign \(t\) to the forward directed
edges and \(t^{-1}\) to the backward directed edges. Then, at each
vertex, the pair \(\{t,t^{-1}\}\) appears once from each 2-factor, and
an inverse-consistent labeling is obtained. In this paper we take two
inverse pairs, namely \(S=\{a,a^{-1},b,b^{-1}\}\).

In the cleaned verification scripts, this step is made completely
deterministic for the \(4\)-regular controller used below. An Euler
circuit is used to two-color the controller edges by parity along the
circuit, giving two \(2\)-factors, and each cycle in the two factors is
then oriented. This produces the inverse-consistent labeling used by the
bundled edge-list generator. The proof of \cref{lem:lift}, however, only needs
the inverse-consistency property and does not depend on this particular
choice of labeling.

\subsection{Route charts and chart
lifts}\label{a.4-route-charts-and-chart-lifts}

Let \(S\) be a symmetric alphabet, \(\mathbb F_q\) a finite field, and
\(s\ge1\).

\begin{definition}\label{def:chart}
A \emph{route chart} on \(\mathbb F_q^s\) is an
assignment, to each \(t\in S\), of \[
  A_t\in\operatorname{GL}_s(\mathbb F_q),
  \qquad
  b_t\in\mathbb F_q^s ,
\] satisfying, for all \(t\in S\), \[
  A_{t^{-1}}=A_t^{-1},
  \qquad
  b_{t^{-1}}=A_t^{-1}b_t .
  \tag{A.1}
\]

For a word \(w=t_1\cdots t_s\) of length \(s\), set \[
  A_w:=A_{t_s}A_{t_{s-1}}\cdots A_{t_1} .
\] For a reduced word \(w=t_1\cdots t_s\) of length \(s\), define the
controllability matrix by \[
  M_w=
  \bigl[
  A_{t_s}\cdots A_{t_2}b_{t_1}\mid
  A_{t_s}\cdots A_{t_3}b_{t_2}\mid
  \cdots\mid
  A_{t_s}b_{t_{s-1}}\mid
  b_{t_s}
  \bigr]
  \in M_s(\mathbb F_q) .
  \tag{A.2}
\] A route chart is \emph{universal} if \(M_w\) is nonsingular for every
reduced word \(w\) of length \(s\).
\end{definition}

\begin{definition}\label{def:lift}
Let \(C\) be a \(\rho\)-regular simple
undirected graph, \(S\) a symmetric alphabet with \(|S|=\rho\),
\(\ell:\overrightarrow E(C)\to S\) an inverse-consistent labeling, and
\((A_t,b_t)_{t\in S}\) a route chart on \(\mathbb F_q^s\). Define the
route-chart lift graph \[
  G=G(C,S,\ell,A,b)
\] as follows. The vertex set is \[
  V(G)=V(C)\times\mathbb F_q^s .
\] For an undirected edge \(\{u,v\}\) of \(C\) with \(\ell(u\to v)=t\),
for each \(x\in\mathbb F_q^s\) and each \(\lambda\in\mathbb F_q\), add
the undirected edge joining the two vertices \[
  (u,x),\qquad (v,A_tx+\lambda b_t) .
\]
\end{definition}

We show that this rule is independent of the orientation chosen for the
undirected edge \(\{u,v\}\). Let the label of the direction \(u\to v\)
be \(t\). For arbitrary \(x\in\mathbb F_q^s\) and
\(\lambda\in\mathbb F_q\), set \[
  y=A_tx+\lambda b_t .
\] Then the rule using the orientation \(u\to v\) adds the edge joining
\((u,x)\) and \((v,y)\).

The reverse direction \(v\to u\) has label \(t^{-1}\). By (A.1), for any
\(\mu\in\mathbb F_q\), \[
  A_{t^{-1}}y+\mu b_{t^{-1}}
  =A_t^{-1}(A_tx+\lambda b_t)+\mu A_t^{-1}b_t
  =x+(\lambda+\mu)A_t^{-1}b_t .
\] Taking \(\mu=-\lambda\) gives \(A_{t^{-1}}y+\mu b_{t^{-1}}=x\). Thus
the reverse orientation adds the same edge \(\{(v,y),(u,x)\}\). Since
\(\lambda\mapsto-\lambda\) is a bijection of \(\mathbb F_q\), the two
orientations give the same family of edges, and the edge-adding rule is
well-defined for the undirected edge. In particular, for any directed
edge \(u\to v\) of \(C\) (label \(t\)), \[
  (u,x)\ \text{and}\ (v,y)\ \text{are adjacent}
  \iff
  \exists\lambda\in\mathbb F_q:\ y=A_tx+\lambda b_t .
\]

\begin{lemma}\label{lem:lift}
With the above notation, suppose the route chart is
universal. Then the following hold.

\begin{enumerate}
\def\labelenumi{\arabic{enumi}.}
\tightlist
\item
  \(G\) is a simple undirected graph.
\item
  \(G\) is \(\rho q\)-regular.
\item
  If \(C\) is an exact-NB-\(s\) controller then \(G\) is connected and
  \(\operatorname{diam}(G)\le s\).
\item
  \(|V(G)|=|V(C)|q^s\).
\end{enumerate}
\end{lemma}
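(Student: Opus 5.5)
The plan is to verify the four items directly from \cref{def:lift}, using the orientation-independence just established: for a directed edge \(u\to v\) of \(C\) with label \(t\), the vertices \((u,x)\) and \((v,y)\) are adjacent iff \(y=A_tx+\lambda b_t\) for some \(\lambda\in\mathbb F_q\). Item 4 is immediate from \(V(G)=V(C)\times\mathbb F_q^s\).

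\emph{Preliminary: \(b_t\ne0\) for every \(t\in S\).} Since \(\iota\) is fixed-point-free, \(|S|\ge2\). We build a reduced word of length \(s\) ending in \(t\) backwards: put \(t_s=t\), and for each \(i<s\) pick \(t_i\) with \(t_i^{-1}\ne t_{i+1}\), i.e.\ \(t_i\ne t_{i+1}^{-1}\). This is possible because \(|S|\ge2\). Universality says \(M_w\) is nonsingular, and its last column is \(b_t\), so \(b_t\ne0\).

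\emph{Items 1 and 2.} There are no loops, because every edge of \(G\) joins two distinct fibers \(u\ne v\) (\(C\) is simple). For multiple edges, note that all edges between the fibers over \(u\) and \(v\) come from the single edge \(\{u,v\}\) of \(C\). The orientation argument shows that the two orientations generate the same edge family rather than two copies. Within one orientation, the map \(\lambda\mapsto A_tx+\lambda b_t\) is injective because \(b_t\ne0\). So the generating rule produces each pair at most once, and we read the edge set of \(G\) as a set. For regularity, fix \((u,x)\). The \(\rho\) directed edges leaving \(u\) go to \(\rho\) distinct neighbours \(v\) (again since \(C\) is simple). Each contributes exactly \(q\) distinct neighbours \(A_tx+\lambda b_t\) in the fiber over \(v\), and neighbours in different fibers are distinct. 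Hence \(\deg(u,x)=\rho q\).

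\emph{Item 3 (main step).} Take \((c,x),(d,y)\in V(G)\). Since \(C\) is an exact-NB-\(s\) controller, there is an NB walk \(c=c_0,\dots,c_s=d\) in \(C\). By the remark in \S\ref{a.3-symmetric-alphabets-and-inverse-consistent-labelings}, its label sequence \(w=t_1\cdots t_s\) is reduced. For any controls \(\lambda_1,\dots,\lambda_s\), set \(x_0=x\) and \(x_i=A_{t_i}x_{i-1}+\lambda_i b_{t_i}\). Then \((c_0,x_0),\dots,(c_s,x_s)\) is a walk in \(G\). The key computation, by induction on \(i\), is
\[
x_s=A_wx+M_w(\lambda_1,\dots,\lambda_s)^T ,
\]
where the column of \(M_w\) attached to \(\lambda_i\) is \(A_{t_s}\cdots A_{t_{i+1}}b_{t_i}\), as in (A.2). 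Since \(M_w\) is invertible, we choose
\[
(\lambda_i)=M_w^{-1}(y-A_wx),
\]
which gives \(x_s=y\). This yields a walk of length \(s\) from \((c,x)\) to \((d,y)\), so \(d_G\bigl((c,x),(d,y)\bigr)\le s\). This holds for every pair, including pairs in the same fiber, so \(G\) is connected and \(\operatorname{diam}(G)\le s\).

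The only place needing care is the bookkeeping in the inductive expansion of \(x_s\): the product order must match \(A_w=A_{t_s}\cdots A_{t_1}\) and the column order in (A.2). Everything else is routine.
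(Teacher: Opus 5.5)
Your proposal is correct and follows the paper's proof essentially step for step. The shared steps are: \(b_t\ne0\) read off as the last column of a nonsingular controllability matrix; injectivity of \(\lambda\mapsto A_tx+\lambda b_t\), together with simplicity of \(C\), for simplicity and \(\rho q\)-regularity; and solving \(M_w(\lambda_1,\dots,\lambda_s)^T=y-A_wx\) along the reduced label word of an NB walk for connectivity and the diameter bound. The only cosmetic difference is that the paper gets \(b_t\ne0\) from the constant word \(t^s\), which is already reduced because \(t\ne t^{-1}\), so your backward construction of a reduced word ending in \(t\) is valid but unnecessary.
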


\begin{proof}
The graph is undirected by \cref{def:lift}. We first
prove simplicity and the degree. Since \(C\) is simple, for a directed
edge \(u\to v\) of \(C\) we always have \(u\ne v\). Hence \(G\) has no
loops. By universality, each \(b_t\) is nonzero. Indeed, the word
\(t^s\) is reduced, and the last column of the corresponding \(M_{t^s}\)
is \(b_t\); if \(M_{t^s}\) is nonsingular then \(b_t\ne0\).

Consider a fixed vertex \((u,x)\in V(G)\). There are \(\rho\) directed
edges leaving \(u\), and for each, choosing \(\lambda\in\mathbb F_q\)
yields the candidate neighbor \[
  (v,A_tx+\lambda b_t) .
\] By the argument at the end of \cref{def:lift}, these are all the
neighbors of \((u,x)\in V(G)\). Distinct directed edges go to distinct
controller vertices \(v\) because \(C\) is simple. For the same directed
edge, if \[
  A_tx+\lambda b_t=A_tx+\lambda' b_t ,
\] then \((\lambda-\lambda')b_t=0\), and from \(b_t\ne0\) we get
\(\lambda=\lambda'\). Hence \((u,x)\) has exactly \(\rho q\) distinct
neighbors. Therefore \(G\) is simple and \(\rho q\)-regular.

Next we show the diameter. Take any \((c,x),(d,y)\in V(G)\). Since \(C\)
is exact-NB-\(s\), there is an NB walk of length \(s\) in \(C\), \[
  c=c_0,c_1,\ldots,c_s=d .
\] Let the label of \(c_{i-1}\to c_i\) be \(t_i\). As shown in the
previous subsection, under the inverse-consistent labeling, the label
sequence \(w=t_1\cdots t_s\) of an NB walk is a reduced word.

Choose control parameters \(\lambda_1,\ldots,\lambda_s\in\mathbb F_q\)
along this walk. Starting the fiber coordinate at \(x_0=x\) and setting
\[
  x_i=A_{t_i}x_{i-1}+\lambda_i b_{t_i}
  \qquad(1\le i\le s) ,
\] we obtain inductively \[
  x_s=A_wx+M_w
  \begin{pmatrix}\lambda_1\\ \vdots\\ \lambda_s\end{pmatrix} .
\] By universality \(M_w\) is nonsingular, so the linear equation \[
  M_w
  \begin{pmatrix}\lambda_1\\ \vdots\\ \lambda_s\end{pmatrix}
  =y-A_wx
\] has a solution. Using this solution, we obtain a walk of length \(s\)
from \((c,x)\) to \((d,y)\). In particular \(G\) is connected, and \(\operatorname{diam}(G)\le s\). The
vertex count follows immediately from the definition.
\end{proof}

\begin{remark}
The route-chart lift of \cref{def:lift} can also be
described as a permutation voltage lift over the expanded base
multigraph. That is, replace each directed edge of \(C\) with label
\(t\) by \(q\) directed edges indexed by \(\lambda\in\mathbb F_q\), and
assign to that directed edge the permutation of \(\mathbb F_q^s\) \[
  \pi_{t,\lambda}(x)=A_t x+\lambda b_t .
\] By condition (A.1), replacing the control parameter \(\lambda\) by
\(-\lambda\) makes the affine map on the reverse directed edge the
inverse permutation. Hence the route-chart lift is an instance of a
graph lift by a permutation voltage assignment \cite{GrossTucker2001,
DalfoEtAl2021}. In this paper, in addition to this permutation lift,
we impose the nonsingularity of the controllability matrix for all
reduced routes of length \(s\), thereby reducing the diameter proof to a
finite list of rank certificates.
\end{remark}

\subsection{The 144-vertex
controller}\label{a.5-the-144-vertex-controller}

Let the vertex set of \(B\) be \(\{0,1,2,3,4,5\}\) and the voltage group
be \[
  H=C_4\times C_6=\mathbb Z/4\mathbb Z\times\mathbb Z/6\mathbb Z .
\] Each edge is as in \cref{tab:base}, with the orientation \(u\to v\) in the
table taken as the forward direction.

{\centering
\begin{longtable}{rrrrr}
\caption{Base multigraph \(B\) and voltage assignment.}\label{tab:base}\\
\toprule
edge id & \(u\) & \(v\) & \(C_4\) component & \(C_6\) component \\
\midrule
\endfirsthead
\toprule
edge id & \(u\) & \(v\) & \(C_4\) component & \(C_6\) component \\
\midrule
\endhead
\bottomrule
\endlastfoot
0 & 0 & 2 & 1 & 0 \\
\midrule
1 & 0 & 3 & 2 & 4 \\
\midrule
2 & 0 & 4 & 2 & 0 \\
\midrule
3 & 0 & 4 & 2 & 5 \\
\midrule
4 & 1 & 2 & 1 & 2 \\
\midrule
5 & 1 & 3 & 3 & 5 \\
\midrule
6 & 1 & 5 & 1 & 1 \\
\midrule
7 & 1 & 5 & 1 & 0 \\
\midrule
8 & 2 & 4 & 2 & 2 \\
\midrule
9 & 2 & 5 & 1 & 5 \\
\midrule
10 & 3 & 4 & 1 & 5 \\
\midrule
11 & 3 & 5 & 3 & 1 \\
\end{longtable}
}

Each vertex occurs four times in \cref{tab:base}, and there are no loops.
Hence \(B\) is a loopless \(4\)-regular multigraph. Setting
\(C:=B^\gamma\), \[
  V(C)=\{0,1,2,3,4,5\}\times C_4\times C_6 ,
\] so \(|V(C)|=6\cdot4\cdot6=144\).

\begin{proposition}\label{prop:controller}
\(C\) is a simple \(4\)-regular connected
graph and an exact-NB-5 controller.
\end{proposition}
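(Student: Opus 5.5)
The plan is to verify the four properties of \(C=B^\gamma\) separately. Regularity, simplicity and connectivity come first, by short structural arguments on \cref{tab:base}. The exact-NB-5 property then follows from \cref{lem:coset}, applied to a finite check of condition (A.0) with \(s=5\).

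\textbf{Regularity.} Each vertex of \(B\) occurs exactly four times in \cref{tab:base} and there are no loops. In the regular lift, a vertex \((i,h)\) therefore receives exactly one edge \((e,h')\) for each of the four edge-incidences of \(i\). So \(C\) is \(4\)-regular as a multigraph.

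\textbf{Simplicity.} No loops arise, since \(B\) is loopless and the first coordinate changes along every lifted edge. Multiple edges can only come from parallel edges of \(B\) that lift to the same pair of vertices. The table has exactly two parallel pairs:
\begin{itemize}
\item edges \(2,3\) between \(0\) and \(4\), with voltages \((2,0)\) and \((2,5)\);
\item edges \(6,7\) between \(1\) and \(5\), with voltages \((1,1)\) and \((1,0)\).
\end{itemize}
In each pair the two voltages differ, so the lifted edges from \((u,h)\) go to distinct vertices. Hence \(C\) is simple.

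\textbf{Exact-NB-5 and connectivity.} For each ordered pair \((i,j)\in V(B)^2\) I would enumerate all NB walks of length \(5\) in \(B\) from \(i\) to \(j\), where backtracking is measured by edge ids as in \cref{def:walk}. For each walk I record its voltage sum in \(H=C_4\times C_6\). The claim to verify is that, for each of the \(36\) pairs, the set of voltage sums is all of \(H\), i.e. all \(24\) elements. The enumeration is small: from each vertex there are at most \(4\cdot 3^4=324\) NB walks of length \(5\). This is a finite, mechanically checkable certificate, carried out by the verification scripts of \S\ref{a.8-verification-and-reproducibility}. Once (A.0) holds, \cref{lem:coset} gives an NB walk of length exactly \(5\) between any two vertices of \(C\). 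That is the exact-NB-5 property, and it immediately implies connectivity.

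\textbf{Main obstacle.} The main difficulty is the coverage check (A.0). It has no slick conceptual proof; the voltages were found by search. Some cheap reductions help. Reversing a walk negates its voltage sum and swaps \(i\) and \(j\), so only pairs with \(i\le j\) need checking, together with symmetry of the covered set when \(i=j\). Still, the heart of the argument is the exhaustive enumeration, and I would present it as a computer-verified certificate. If a human-readable witness were wanted, I would list, for each \((i,j,h)\), one explicit walk. That gives an \(864\)-entry table which a reader could spot-check.
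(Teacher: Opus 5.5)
Your proposal is correct and follows the paper's proof essentially step for step: regularity from the \(4\)-regular loopless base, simplicity from the distinct voltages on the parallel pairs \(\{2,3\}\) and \(\{6,7\}\), and the exact-NB-5 property via Lemma~\ref{lem:coset} plus exhaustive enumeration of the length-\(5\) NB walks in \(B\). The only deviation is that you derive connectivity as a corollary of the computer-checked coverage, whereas the paper gives a separate hand-checkable argument (the base closed-walk voltages \((0,5)\) and \((1,2)\) generate \(H\)); both are valid.
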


\begin{proof}
Regularity follows from the definition of the regular
lift and the \(4\)-regularity of \(B\). As for simplicity, since \(B\)
has no loop, the lift has no loop. The only parallel pairs of base edges
are edges \(2\) and \(3\), and edges \(6\) and \(7\); in each pair, the
voltages are different. For two base edges joining the same two base
vertices to give the same lift edge, their voltages must be equal, so no
multiple edge arises.

The base multigraph \(B\) is connected by inspection of \cref{tab:base}.
Connectivity of the lift follows from the fact that the closed-walk
voltages of the base generate all of \(H=C_4\times C_6\). For example,
the voltage of \[
  0\xrightarrow{e_3}4\xrightarrow{e_2^{-1}}0
\] is \((2,5)-(2,0)=(0,5)\), which generates the \(C_6\) component.
Also, the voltage of \[
  0\xrightarrow{e_0}2\xrightarrow{e_8}4\xrightarrow{e_2^{-1}}0
\] is \((1,0)+(2,2)-(2,0)=(1,2)\). Together with \((0,5)\), this
generates all of \(H\). Hence \(C\) is connected.

For the exact-NB-5 property, by \cref{lem:coset} it suffices to verify that,
for any \(i,j\in\{0,\ldots,5\}\) and any \(h\in C_4\times C_6\), there
is a length-\(5\) NB walk \(i\to j\) in \(B\) with voltage sum \(h\).
The number of length-\(5\) NB walks from each starting vertex of \(B\)
is at most \(4\cdot3^4=324\), and over all starting vertices it is at
most \(6\cdot324=1944\), so a complete enumeration is possible.
The supplementary \texttt{scripts/*/verify\_candidate.py} enumerates these
by DFS and verifies that, for each of the \(36\) ordered base pairs, all
\(24\) voltages of \(H\) are realized, i.e.\ all \(36\cdot24=864\)
pair--voltage combinations of condition (A.0).
\end{proof}

\subsection{\texorpdfstring{Universal route charts for
\(s=5\)}{A.6 Universal route charts for s=5}}\label{a.6-universal-route-charts-for-s5}

In this section we set \(S=\{0,1,2,3\}\) with the correspondence
\(0=a\), \(1=a^{-1}\), \(2=b\), \(3=b^{-1}\). The involution is
\(0\leftrightarrow1\), \(2\leftrightarrow3\). There are
\(4\cdot3^4=324\) reduced words of length \(5\).

\subsubsection{\texorpdfstring{Chart on
\(\mathbb F_3^5\)}{A.6.1 Chart on \textbackslash mathbb F\_3\^{}5}}\label{a.6.1-chart-on-mathbb-f_35}

Let \(\mathbb F_3=\{0,1,2\}\). We use the following matrices and
vectors.

\[
A_0=\begin{pmatrix}
1&2&0&2&0\\
1&2&1&2&0\\
1&2&1&1&0\\
2&0&0&0&0\\
1&2&2&0&1
\end{pmatrix},\quad
A_1=\begin{pmatrix}
0&0&0&2&0\\
2&2&1&2&0\\
2&1&0&0&0\\
0&1&2&0&0\\
1&0&1&0&1
\end{pmatrix},
\] \[
A_2=\begin{pmatrix}
2&1&0&0&0\\
0&1&1&0&0\\
2&2&0&1&0\\
2&0&1&0&0\\
2&2&1&1&1
\end{pmatrix},\quad
A_3=\begin{pmatrix}
1&2&0&1&0\\
2&2&0&1&0\\
1&2&0&2&0\\
0&1&1&2&0\\
2&1&2&1&1
\end{pmatrix},
\] \[
b_0=\begin{pmatrix}2\\2\\2\\0\\0\end{pmatrix},\quad
b_1=\begin{pmatrix}0\\1\\0\\0\\1\end{pmatrix},\quad
b_2=\begin{pmatrix}2\\1\\2\\0\\2\end{pmatrix},\quad
b_3=\begin{pmatrix}1\\0\\1\\0\\2\end{pmatrix}.
\]

\begin{proposition}\label{prop:a10}
The correspondence \((A_t,b_t)_{t\in S}\) is
a universal route chart on \(\mathbb F_3^5\).
\end{proposition}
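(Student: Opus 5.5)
Proving \cref{prop:a10} means checking two things: the pairing conditions (A.1) in \cref{def:chart}, and nonsingularity of $M_w$ for every reduced word of length $5$. Both are finite computations over $\mathbb F_3$.

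First I would verify that each $A_t$ lies in $\operatorname{GL}_5(\mathbb F_3)$. For (A.1), since the involution swaps $0\leftrightarrow1$ and $2\leftrightarrow3$, it suffices to check
\[
A_0A_1=I,\qquad A_1b_0=b_1,\qquad A_2A_3=I,\qquad A_3b_2=b_3 .
\]
Indeed, $A_1=A_0^{-1}$ gives $b_1=A_0^{-1}b_0$, and the converse identities $A_0=A_1^{-1}$ and $b_0=A_1^{-1}b_1=A_0b_1$ then follow automatically. The same holds for the pair $2,3$.

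The products have a visible structure. The last column of each $A_t$ is $e_5$, so every $A_t$ fixes $e_5$ and preserves the hyperplane $\{x_5=0\}$ modulo the last coordinate. This halves the bookkeeping: one can check the upper-left $4\times4$ blocks are mutually inverse, and then check the bottom rows. For example, the $(1,1)$ entry of $A_0A_1$ is $1\cdot0+2\cdot2+0+2\cdot0+0=4\equiv1$, as required. I would compute all four products by hand and display them, or at least state that they are routine.

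The main step is universality. There are $4\cdot3^4=324$ reduced words $w=t_1\cdots t_5$, and for each we need $\det M_w\ne0$ in $\mathbb F_3$. I would organize the computation by suffixes rather than treating the $324$ words independently. The columns of $M_w$ are $A_{t_5}\cdots A_{t_{i+1}}b_{t_i}$, so the matrix is built recursively via
\[
M_{w t}=\bigl[\,A_tM_w \mid b_t\,\bigr]
\]
for prefixes of increasing length (a $5\times k$ matrix grows to $5\times(k+1)$). This also gives a cheap necessary check along the way: every intermediate prefix must have full column rank $k$. That prunes nothing here, since all words must pass, but it gives a structured certificate.

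I do not expect a conceptual shortcut. The chart was found by search, and a $3$-adic structure argument is unlikely. So the proof will consist of the statement that the determinants were enumerated, with a reference to the supplementary verification script, exactly as in \cref{prop:controller}. To make the certificate checkable independently, I would list, for each of the $324$ words, the rank or $\det M_w\in\{1,2\}$, perhaps in a table in the supplement. I would also note two reductions that shorten a manual check:
\begin{itemize}
\item Inverting and reversing the word relates $M_{w^{-1}}$ to $M_w$ up to multiplication by an invertible $A$-product and a signed column reversal, so only about half of the words need to be checked.
\item Since $\det$ is multiplicative, $\det M_w=\det(A_{t_5})\cdot\det[\,M_{t_1\cdots t_4}\ \text{shifted}\,]$-type factorizations reduce each determinant to operations on smaller blocks.
\end{itemize}

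The main obstacle is purely computational: the correctness of $324$ determinant evaluations over $\mathbb F_3$. The plan is to rely on exhaustive machine verification, presented with the recursive column-building scheme so that a reader's own script can reproduce it in milliseconds.
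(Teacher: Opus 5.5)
Your proposal matches the paper's proof: both reduce Proposition~\ref{prop:a10} to finite checks over $\mathbb F_3$, namely the pairing conditions (A.1) and $\operatorname{rank}M_w=5$ for all $324$ reduced words, and both delegate these checks to the supplementary verification script. Your observation that $A_0A_1=I$ and $A_1b_0=b_1$ (and likewise for the pair $2,3$) already imply the reverse identities is correct, which makes the paper's two-sided checks redundant. Your optional economies are also valid: the recursion $M_{wt}=[A_tM_w\mid b_t]$, and the identity $A_wM_{w^{-1}}=M_wP$ with $P$ the column reversal. One small correction: the $A_t$ fix the line spanned by $e_5$, with the upper-left $4\times4$ block giving the induced map on the quotient. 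They do not preserve the hyperplane $\{x_5=0\}$ itself, since their bottom rows are nonzero.
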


\begin{proof}
The supplementary
\texttt{scripts/12\_5\_34992/verify\_candidate.py} verifies the
following with arithmetic over \(\mathbb F_3\).

\begin{enumerate}
\def\labelenumi{\arabic{enumi}.}
\tightlist
\item
  \(A_0A_1=A_1A_0=I_5\), \(A_2A_3=A_3A_2=I_5\).
\item
  \(A_1b_0=b_1\), \(A_0b_1=b_0\), \(A_3b_2=b_3\), \(A_2b_3=b_2\).
\item
  For all \(324\) reduced words \(w\) of length \(5\), the matrix
  \(M_w\) of (A.2) has rank \(5\).
\end{enumerate}

These are exactly the conditions of \cref{def:chart} and universality.
\end{proof}

\subsubsection{\texorpdfstring{Chart on
\(\mathbb F_4^5\)}{A.6.2 Chart on \textbackslash mathbb F\_4\^{}5}}\label{a.6.2-chart-on-mathbb-f_45}

Let \(\mathbb F_4=\mathbb F_2[\alpha]/(\alpha^2+\alpha+1)\), with the
encoding \(0=0\), \(1=1\), \(2=\alpha\), \(3=\alpha+1\). We use the
following matrices and vectors.

\[
A_0=\begin{pmatrix}
2&0&1&2&0\\
1&2&2&0&0\\
3&0&0&3&0\\
0&0&1&3&0\\
2&1&2&2&2
\end{pmatrix},\quad
A_1=\begin{pmatrix}
2&0&3&2&0\\
0&3&1&1&0\\
1&0&3&0&0\\
2&0&1&2&0\\
1&2&2&3&3
\end{pmatrix},
\] \[
A_2=\begin{pmatrix}
3&1&3&2&0\\
0&1&0&0&0\\
1&0&2&0&0\\
2&1&0&0&0\\
3&2&2&0&2
\end{pmatrix},\quad
A_3=\begin{pmatrix}
0&3&0&3&0\\
0&1&0&0&0\\
0&2&3&2&0\\
3&1&1&2&0\\
0&2&3&3&3
\end{pmatrix},
\] \[
b_0=\begin{pmatrix}2\\1\\2\\0\\2\end{pmatrix},\quad
b_1=\begin{pmatrix}2\\1\\3\\1\\2\end{pmatrix},\quad
b_2=\begin{pmatrix}2\\1\\0\\2\\2\end{pmatrix},\quad
b_3=\begin{pmatrix}2\\1\\1\\3\\2\end{pmatrix}.
\]

\begin{proposition}\label{prop:a11}
The correspondence \((A_t,b_t)_{t\in S}\) is
a universal route chart on \(\mathbb F_4^5\).
\end{proposition}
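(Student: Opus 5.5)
The proof of \cref{prop:a11} will mirror that of \cref{prop:a10}: a finite verification of the conditions in \cref{def:chart}, now with arithmetic carried out in $\mathbb F_4=\mathbb F_2[\alpha]/(\alpha^2+\alpha+1)$. Fix the encoding $0,1,2,3\leftrightarrow 0,1,\alpha,\alpha+1$. Addition is bitwise XOR of the two-bit codes. Multiplication follows from $\alpha^2=\alpha+1$, so $2\cdot2=3$, $2\cdot3=1$, $3\cdot3=2$. I would tabulate both operations once. All subsequent checks reduce to matrix products and Gaussian elimination over this field. These are exactly the operations implemented in \texttt{scripts/16\_5\_147456/verify\_candidate.py}.

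The steps, in order, are as follows.

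\begin{enumerate}
\item Verify the inverse-symbol conditions (A.1): $A_0A_1=A_1A_0=I_5$ and $A_2A_3=A_3A_2=I_5$. In particular this shows each $A_t\in\operatorname{GL}_5(\mathbb F_4)$. Since the last columns of $A_0,A_1,A_2,A_3$ are multiples of $e_5$, the fifth coordinate decouples partially, which gives a quick sanity check. For instance, the $(5,5)$ entries $2$ and $3$ of $A_0$ and $A_1$ are inverse in $\mathbb F_4$, because $\alpha(\alpha+1)=1$.
\item Check $A_1b_0=b_1$ and $A_0b_1=b_0$. The second follows from the first given step 1, and similarly $A_3b_2=b_3$.
\item Enumerate the $4\cdot3^4=324$ reduced words $w=t_1\cdots t_5$ over $S=\{0,1,2,3\}$. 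For each, build $M_w$ by the Horner-type recursion. Start with the matrix $[b_{t_1}]$. At step $i$, multiply all existing columns by $A_{t_i}$ and append $b_{t_i}$. This yields exactly the columns in (A.2). Then compute $\operatorname{rank}M_w$ by elimination over $\mathbb F_4$ and confirm that it equals $5$ in every case.
\end{enumerate}

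Conditions 1--3 are precisely the defining conditions of a universal route chart, so the proposition follows.

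The main obstacle is purely computational reliability rather than conceptual. Nonsingularity of $324$ $5\times5$ matrices over $\mathbb F_4$ admits no evident structural shortcut. I would therefore not attempt a hand proof. Instead I would rely on the deterministic script, and make the certificate checkable independently in two ways:
\begin{itemize}
\item recompute each determinant as an element of $\mathbb F_4$ via a second method, for example cofactor expansion or elimination with a different pivot order, and confirm that it is nonzero;
\item double-check the field implementation, in particular that code $3$ is $\alpha+1$ and not $\alpha^2$ mislabeled, by testing the multiplication table against associativity and distributivity.
\end{itemize}

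One could halve the rank checks using the symmetry $w\mapsto$ reversed-inverse word. However, the resulting relation between $M_w$ and $M_{w'}$ involves the matrices $A_w^{-1}$, and full enumeration is cheap, so I would simply check all $324$ words.
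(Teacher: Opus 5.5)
Your proposal is correct and follows the paper's own argument: a finite machine check, using $\mathbb F_4$ arithmetic in the stated encoding, of the inverse relations (A.1) and of $\operatorname{rank}M_w=5$ for all $324$ reduced words of length $5$, which are exactly the defining conditions of a universal route chart. One harmless slip in a side remark: in this field $\alpha^2=\alpha+1$, so ``code $3$ is $\alpha+1$ and not $\alpha^2$'' distinguishes nothing; all that matters is that the multiplication table is the one generated by $\alpha^2=\alpha+1$, and the table you wrote down already is.
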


\begin{proof}
The supplementary
\texttt{scripts/16\_5\_147456/verify\_candidate.py} verifies, with
\(\mathbb F_4\) arithmetic using the above encoding, the same three
kinds of conditions as in \cref{prop:a10}. That is, the inverse
relations (A.1) and the rank-\(5\) condition for all \(324\)
controllability matrices hold.
\end{proof}

\subsection{Proof of the main
theorem}\label{a.7-proof-of-the-main-theorem}

\begin{theorem}\label{thm:bounds}
The following inequalities hold: \[
  N(12,5)\ge34{,}992,
  \qquad
  N(16,5)\ge147{,}456 .
\]
\end{theorem}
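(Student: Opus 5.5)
The plan is to assemble the pieces already established in this appendix. First I take the controller \(C=B^\gamma\) of \cref{prop:controller}. It is a simple, connected, \(4\)-regular graph on \(144\) vertices and it is an exact-NB-\(5\) controller. Since \(C\) is \(4\)-regular, and \(4\) is even, the Petersen \(2\)-factor argument of \S\ref{a.3-symmetric-alphabets-and-inverse-consistent-labelings} gives an inverse-consistent labeling \(\ell:\overrightarrow E(C)\to S\) with \(S=\{a,a^{-1},b,b^{-1}\}\), which we identify with \(\{0,1,2,3\}\) as in \S\ref{a.6-universal-route-charts-for-s5}. Concretely, I would split the edges into two \(2\)-factors, assign one inverse pair to each, and orient every cycle.

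Next, for \(q=3\) I use the universal route chart of \cref{prop:a10} on \(\mathbb F_3^5\), and for \(q=4\) the one of \cref{prop:a11} on \(\mathbb F_4^5\). Applying \cref{lem:lift} (equivalently \cref{prop:lift}) with \(\rho=4\) and \(s=5\) produces graphs \(G_3=G(C,S,\ell,A,b)\) and \(G_4\). Each is simple, connected, and \(4q\)-regular, so \(G_3\) is \(12\)-regular and \(G_4\) is \(16\)-regular. Each has \(\operatorname{diam}\le5\), and the vertex counts are \(144\cdot3^5=34{,}992\) and \(144\cdot4^5=147{,}456\). By the definition of \(N(\Delta,D)\) as a maximum over connected graphs of maximum degree at most \(\Delta\) and diameter at most \(D\), these graphs witness both inequalities of \cref{thm:bounds}.

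Finally, to recover the sharper statement \(\operatorname{diam}=5\) in \cref{thm:main}, I compare with the Moore bound for \(D=4\), namely \(1+\Delta\sum_{i=0}^{3}(\Delta-1)^i\):
\[
\Delta=12:\quad 1+12(1+11+121+1331)=17{,}569<34{,}992,
\]
\[
\Delta=16:\quad 1+16(1+15+225+3375)=57{,}857<147{,}456.
\]
So neither graph can have diameter at most \(4\), and in each case the diameter is exactly \(5\).

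The only substantive step is not in this final argument. It lies in the finite certificates already established: the \(864\) pair--voltage checks behind \cref{prop:controller}, and the \(324\) rank checks behind each of \cref{prop:a10} and \cref{prop:a11}. These are delegated to the verification scripts. Given those, the theorem is a direct composition of the earlier results.
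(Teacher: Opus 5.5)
Your proposal is correct and follows essentially the same route as the paper's proof. Like the paper, you fix the $144$-vertex exact-NB-$5$ controller with an inverse-consistent labeling from the $2$-factor argument, apply \cref{lem:lift} with the universal charts over $\mathbb F_3^5$ and $\mathbb F_4^5$, and then use the diameter-$4$ Moore bounds $17{,}569$ and $57{,}857$ (which you computed correctly) to conclude that the diameter is exactly $5$.
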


\begin{proof}
Fix the \(144\)-vertex controller \(C\) of \S\ref{a.5-the-144-vertex-controller} and an
inverse-consistent labeling of its directed edges, which exists by
\S\ref{a.3-symmetric-alphabets-and-inverse-consistent-labelings}.

First let \(q=3\) and use the universal route chart of \S\ref{a.6.1-chart-on-mathbb-f_35}. By
\cref{prop:controller}, \(C\) is exact-NB-5, and by \cref{prop:a10} the chart
is universal. By \cref{lem:lift}, \[
  G_3=G(C,S,\ell,A,b)
\] is simple connected \(4q=12\)-regular with \(\operatorname{diam}(G_3)\le5\),
and satisfies \[
  |V(G_3)|=|V(C)|q^5=144\cdot3^5=34{,}992 .
\]

Similarly, letting \(q=4\) and using the universal route chart of
\S\ref{a.6.2-chart-on-mathbb-f_45}, by \cref{lem:lift}, \[
  G_4=G(C,S,\ell,A,b)
\] is simple connected \(16\)-regular with \(\operatorname{diam}(G_4)\le5\), and
satisfies \[
  |V(G_4)|=144\cdot4^5=147{,}456 .
\] Hence the two claimed lower bounds follow.

Furthermore, \(34{,}992\) exceeds the \((12,4)\) Moore upper bound
\(17{,}569\), and \(147{,}456\) exceeds the \((16,4)\) Moore upper bound
\(57{,}857\). Hence the diameters of these graphs cannot be at most
\(4\), and in both cases the diameter is exactly \(5\).
\end{proof}

\subsection{Verification and
reproducibility}\label{a.8-verification-and-reproducibility}

Every computer-dependent part of the proof in this paper consists of
verifying finitely many conditions. The supplementary materials provide the
data and scripts in two packages, which record the same graphs.

\begin{itemize}
\tightlist
\item
  \texttt{route\_chart\_verified\_graphs\_package/} is a frozen
  provenance artifact, not a cleaned software release. It preserves the
  process by which the construction was obtained. Some generating
  scripts contain unstandardized elements, including default output
  paths from the execution environment used during the dialogue and
  minor differences in the format of the edge-list header. The
  natural-language description files are kept as provenance only and are
  not parsed by the certificate-level or generation-level verification.
  This package is not used in Checks 1--3 below. The optional Check 0
  only compares the clean package against it in order to record
  provenance equivalence.
\item
  \texttt{route\_chart\_clean\_package/} is cleaned and runnable, and is
  the package to execute. For each parameter set, the construction data
  are collected in a single machine-readable certificate,
  \texttt{certificates/*/certificate.json}. This certificate records the
  voltage assignment on the base multigraph, given in \cref{tab:base},
  together with the universal route chart on \(\mathbb F_q^5\) defined
  in \S\ref{a.6-universal-route-charts-for-s5}, namely the transition matrices \(A_t\) and vectors \(b_t\).
  The corresponding edge list is bundled as
  \texttt{graphs/*/final\_graph\_edges.tsv.gz}, and the step-by-step
  procedure is given in \texttt{route\_chart\_clean\_package/VERIFY.md}.
\end{itemize}

The objects used in the mathematical proof are the construction data made
explicit in Appendix A and the corresponding finite list of certificate
conditions.
The runnable verification is organized as the following four checks. The
commands are run from the root of
\texttt{route\_chart\_clean\_package/}.

\begin{enumerate}
\def\labelenumi{\arabic{enumi}.}
\setcounter{enumi}{-1}
\item
  \textbf{Provenance equivalence (optional).}
  \texttt{verify\_provenance\_equivalence.py} confirms that the certificate
  and the edge list in the clean package describe the same construction as
  the provenance package. The comparison of certificates is semantic, since
  the clean package uses a unified JSON format while the provenance package
  keeps the original text files. This ties the runnable artifact to the recorded
  discovery.
\item
  \textbf{Certificate-level verification.}
  \texttt{scripts/*/verify\_candidate.py} loads the certificate and verifies
  the voltage coverage of length-\(5\) nonbacktracking walks on the base
  multigraph, the simplicity, \(4\)-regularity, and connectivity of the
  regular lift controller, the inverse-consistent labeling, the direct
  exact length-\(5\) nonbacktracking coverage of all ordered vertex
  pairs on the \(144\)-vertex controller, and the
  rank-\(5\) condition of the controllability matrix for all \(324\) reduced
  words of length \(5\). The expected output has the following form.

{\small
\begin{verbatim}
Base voltage coverage over C4 x C6: OK
Controller: OK; 144 vertices, simple 4-regular,
  exact labeled NB length-5 coverage
GF(3)^5 universal chart: OK; 324 reduced words checked,
  min rank 5; inverse-symbol data verified
Certificate complete: degree 12, diameter <=5,
  vertices = 144*3^5 = 34992
\end{verbatim}
}

  This is the output for \((12,5)\). The \((16,5)\) case prints
  \texttt{GF(4)\^{}5}, \texttt{degree\ 16}, and
  \texttt{144*4\^{}5\ =\ 147456} in the corresponding positions.

\item
  \textbf{Generation-level identity check.}
  \texttt{scripts/*/generate\_final\_graph.py} rebuilds the edge list from
  the certificate, writing it to a path specified on the command line. The
  top-level \texttt{verify\_graphs\_identical.py} then checks that this graph
  and the bundled
  \texttt{route\_chart\_clean\_package/graphs/*/final\_graph\_edges.tsv.gz}
  are identical, in two parts: it compares the SHA-256 hash of the edge rows,
  excluding the header, and it checks agreement of the edge sets after
  normalizing the orientation of each edge. The first part verifies agreement
  of the edge rows in the same order. The second verifies graph-theoretic
  identity independently of file format or row order.
\item
  \textbf{Graph-level independent check.} \texttt{verify\_with\_igraph.py}
  reads the bundled edge list directly and checks the absence of self-loops,
  the absence of multiple edges, the number of vertices, the number of edges,
  the regular degree, the handshake equality, the connectivity, and the exact
  diameter computed by igraph. This script uses only the edge list,
  and depends on neither the certificate nor the generating scripts. The exact
  diameter computation is an additional check. The diameter-\(5\) proof of this paper does not depend on
  it.
\end{enumerate}

In summary, the mathematical proof relies on the explicit data in
Appendix A and on the finite certificate conditions checked in Check 1.
Check 2 verifies that the bundled edge lists are exactly the graphs
generated from those data. Check 3 is an additional, construction-free
verification of the bundled edge lists.

\section{Structure of the supplementary
materials}\label{appendix-b.-structure-of-the-supplementary-materials}

The supplementary materials contain the
visible transcript of the discovery process, the script that produces
that transcript from the conversation log, the construction package
preserved as provenance, a cleaned and runnable version of that package,
and the scripts for provenance comparison, identity
checking, and independent graph-level verification.

\begin{itemize}
\tightlist
\item
  \texttt{dump\_ddp\_visible.py}: a script that extracts, from the
  conversation log obtained via the official ChatGPT download function,
  the ordinary user prompt bubbles and assistant answer bubbles visible
  in the browser UI. Other non-chat records in the export, such as hidden
  reasoning, tool calls, tool outputs, and system messages, are not
  included in the released transcript. The only export metadata retained
  are per-message timestamps and model identifiers, together with a brief
  conversation-level header, kept as annotations.
\item
  \texttt{extracted/DDP\_visible\_transcript.md}: the visible transcript
  extracted by \texttt{dump\_ddp\_visible.py}. It targets Rounds 001--052
  relevant to the present discovery and verification.
\item
  \texttt{route\_chart\_verified\_graphs\_package/}: the construction
  package produced by the LLM during the dialogue, preserved as a
  provenance artifact rather than as the runnable verification package.
  Details are given in \S\ref{a.8-verification-and-reproducibility} of Appendix A.
\item
  \texttt{route\_chart\_clean\_package/}: a cleaned, runnable version of
  \texttt{route\_chart\_verified\_graphs\_package/}, intended to be
  executed by verifiers. Details are given in \S\ref{a.8-verification-and-reproducibility} of Appendix A.
\item
  \texttt{verify\_provenance\_equivalence.py}: a script that confirms that
  the clean package describes the same construction as the provenance
  package, by a semantic comparison of the certificates and a content
  comparison of the bundled edge lists.
\item
  \texttt{verify\_graphs\_identical.py}: a script that verifies that the
  graph generated by \texttt{generate\_final\_graph.py} and the graph bundled
  under \texttt{route\_chart\_clean\_package/graphs/} are identical. It checks
  the SHA-256 of the edge rows excluding the header, and the agreement of the
  normalized edge set.
\item
  \texttt{verify\_with\_igraph.py}:
  independent verification script. It checks the bundled graph for the
  absence of self-loops, the absence of multiple edges, the number of
  vertices, the number of edges, the regular degree, the handshake
  equality, and the connectivity, and computes the exact diameter with igraph. The diameter computation is an optional additional check,
  and the diameter-\(5\) proof of this paper does not depend on this script.
\end{itemize}

\bibliographystyle{abbrvnat}
\bibliography{references}

\end{document}